\newtheorem{thm}{Theorem}[section]
\newtheorem{cor}[thm]{Corollary}
\newtheorem{lem}[thm]{Lemma}
\newtheorem{prop}[thm]{Proposition}
\newtheorem{question}[thm]{Question}                                     
\theoremstyle{definition}
\newtheorem{defin}[thm]{Definition}
\theoremstyle{remark}
\newtheorem{remark}[thm]{Remark}
\newtheorem{examples}[thm]{Examples}
\numberwithin{equation}{section}
\newtheorem{problem}[thm]{Problem}
\numberwithin{equation}{section}
\newcommand{\delete}[1]{} 
\newcommand{\nt}{\noindent}
\def\ep{{\varepsilon}}
\def\al{\alpha}
\newcommand{\g}{\gamma}
\newcommand{\del}{\delta}
\newcommand{\sk}{\vskip 0.2cm}
\newcommand{\ssk}{\vskip 0.1cm}
\newcommand{\ben}{\begin{enumerate}}
	\newcommand{\een}{\end{enumerate}}
\newcommand{\bit}{\begin{itemize}}
	\newcommand{\eit}{\end{itemize}}
\def\R {{\mathbb R}}
\def\N {{\mathbb N}}
\def\Z {{\mathbb Z}}
\def\Q {{\mathbb Q}}
\def\T {{\mathbb T}}
\def\Aut{{\mathrm Aut}\,}
\def\Homeo{{\mathrm{Homeo}}\,}
\def\Sp{{\mathrm{Split}}}
\def\Aut{{\rm{Aut}}}
\newcommand{\abs}[1]{\lvert#1\rvert}
\begin{document}

\title[]{Circular orders, ultra-homogeneous order structures and their automorphism groups}

\author[]{Eli Glasner}
\address{Department of Mathematics,
	Tel-Aviv University, Ramat Aviv, Israel}
\email{glasner@math.tau.ac.il}
\urladdr{http://www.math.tau.ac.il/$^\sim$glasner}

\author[]{Michael Megrelishvili}
\address{Department of Mathematics,
	Bar-Ilan University, 52900 Ramat-Gan, Israel}
\email{megereli@math.biu.ac.il}
\urladdr{http://www.math.biu.ac.il/$^\sim$megereli}
\thanks{This research was supported by a grant of the Israel Science Foundation (ISF 1194/19) 
and also by the Gelbart Research Institute at the Department of Mathematics, Bar-Ilan  University}  


\subjclass[2010]{Primary 37Bxx; Secondary 54H20; 54H15; 22A25}

\keywords{amenability, automatic continuity, circular order, extremely amenable, Fra\"{i}ss\'{e} class, intrinsically tame,  Kazhdan's property $T$, Roelcke precompact, Thompson's circular group, ultrahomogeneous}

\begin{abstract}  
	We study topological groups $G$ for which the universal minimal $G$-system $M(G)$, or the universal irreducible affine $G$-system $I\!A(G)$ are tame. 
	We call such groups ``intrinsically tame" and ``convexly intrinsically tame", respectively. These notions, which were introduced in \cite{GM-tLN}, are generalized versions of extreme amenability and amenability, respectively. 
	When $M(G)$, as a $G$-system, admits a circular order 
	we say that $G$ is intrinsically circularly ordered.
	This implies that $G$ is intrinsically tame.
	
	We show that given a circularly ordered set $X_\circ$, any subgroup $G \leq \Aut(X_\circ)$ whose action on $X_\circ$ is ultrahomogeneous, when equipped with the topology $\tau_p$ of pointwise convergence, is intrinsically circularly ordered. This result is a ``circular" analog of Pestov's result about the extreme amenability of ultrahomogeneous actions on linearly ordered sets by linear order preserving transformations. We also describe, for such groups $G$, the dynamics of the system $M(G)$, and show that it is extremely proximal (whence $M(G)$ coincides with the universal strongly proximal $G$-system), and deduce that the group $G$ must contain a non-abelian free group.
	
	In the case where $X$ is countable, the corresponding Polish group of circular automorphisms $G=\Aut(X_o)$ admits a concrete description.
	Using the Kechris-Pestov-Todorcevic construction we show that $M(G)=\Sp(\T;\Q_{\circ})$, a circularly ordered compact metric space (in fact, a Cantor set) obtained by splitting the rational points on the circle $\T$.  
	We show also that $G=\Aut(\Q_{\circ})$ is Roelcke precompact,  satisfies Kazhdan's property $T$ (using results of Evans-Tsankov) and has the automatic continuity property (using results of Rosendal-Solecki).  
\end{abstract}

\maketitle

\setcounter{tocdepth}{1}
\tableofcontents

\section{Introduction}

The universal minimal $G$-system $M(G)$ of a topological group $G$ 
can serve as a beautiful link between the theories of topological groups and topological dynamics. 
In general, $M(G)$ may be very large. For instance, it is 
always nonmetrizable for locally compact noncompact groups. 
The question whether $M(G)$ is small in some sense or another is pivotal in this theory. 
Recall that $G$ is said to be \textit{extremely amenable} (or to have the {\em fixed point on compacta property}) when the system $M(G)$ is trivial. 
An important and now well studied (see \cite{BMT17, van-theARXIV, Zucker16}) 
question is : when is $M(G)$ metrizable~? 

Another interesting new direction is to determine when is $M(G)$ \textit{dynamically small} 
or non-chaotic.
A closely related question is: when is the universal irreducible affine $G$-system $I\!A(G)$ (dynamically) small ? It is well known that $G$ is amenable iff $I\!A(G)$ is trivial. 

In \cite{GM-tLN} we raised the question: when are $M(G)$ and $I\!A(G)$ tame dynamical systems~? 
In the present work we give a new sufficient condition which guarantees that $M(G)$ and $I\!A(G)$ are circularly ordered $G$-systems (hence, by our previous work \cite{GM-c}, tame).   

Recall that an order preserving 
effective  
action $G \curvearrowright X$ on a linearly ordered infinite set $X_{<}$ is said to be \emph{ultrahomogeneous} if every order isomorphism between two finite subsets can be extended to an order automorphism $g \in G$ of $X_{<}$.   
We say that $X_{<}$ is  \emph{ultrahomogeneous}
when the tautological action of $\Aut(X_{<})$ on $X$ is ultrahomogeneous.

As was shown by Pestov \cite{Pes98}, 
for any subgroup $G \leq \Aut(X_{<})$ 
whose action on $X_{<}$ is ultrahomogeneous, the topological group $(G,\tau_p)$, 
in its pointwise convergence topology, is extremely amenable. 
That is, every continuous action of $G$ on a compact Hausdorff space has a fixed point. For example the Polish group $\Aut(\Q_{<})$ of automorphisms of 
the linearly ordered set $\Q$ is extremely amenable in its pointwise convergence topology. 
Ultrahomogeneous structures, Ramsey theory and Fra\"{i}ss\'{e} limits play a major role in many modern works, \cite{Pes98}, \cite{GW-02}, \cite{GW-03}, \cite{KPT,Pe-b,van-the,MNT}.  

Our aim here is to examine the role of circular orders (\emph{c-order}, for short). 
We say that an effective action of a group $G$ of c-order automorphisms on a c-ordered infinite set $X_{\circ}$ is \emph{ultrahomogeneous} if every c-order isomorphism between two finite subsets 
can be extended to a c-order automorphism of $X_{\circ}$ in $G$. 
We say that $X_{\circ}$ is \emph{ultrahomogeneous} when the tautological action of $\Aut(X_{\circ})$
on $X$ is ultrahomogeneous. 

An important particular case is $X=\Q_{\circ}=\Q / \Z$, the rational points of the 
circle $\T=\R / \Z$. The corresponding automorphism group $G:=\Aut(\Q_{\circ})$, 
equipped with the topology of pointwise convergence $\tau_p$, 
is a Polish nonarchimedean topological group (hence a closed subgroup of the symmetric group $S_{\infty}$).

Let $G$ be a topological group and $\textbf{P}$ a property of $G$-dynamical systems, we say
$G$ has the {\em intrinsic property $\textbf{P}$} when the universal minimal $G$-system $M(G)$
has this property.
Of course when $\textbf{P}$ is a property which is preserved by factors, this is the same as saying
that every minimal $G$-system has the property $\textbf{P}$.
With $\textbf{P}$ the property of being the trivial one point system, having the intrinsic $\textbf{P}$ property
reduces to extreme amenability. Our general idea is to relax this extreme case
by choosing $\textbf{P}$ to be a ``small" class of $G$-systems (see \cite{GM-tLN}).

In Theorem \ref{main-ultra} we prove that 
when a group $G$ acts ultrahomogeneously 
on an infinite circularly ordered set $X$, 
then $G$, with the pointwise convergence topology it inherits from this action, 
is intrinsically c-ordered.  
That is, the universal minimal $G$-system $M(G)$ is a circularly ordered $G$-system. 
In particular this applies to $\Aut(X_{\circ})$ when $X_{\circ}$ is ultrahomogeneous. 
We do not know whether this implies that every minimal $G$-system is circularly ordered.

Furthermore, in the case where $X$ is countable, it follows that
$M(G)$ is a metrizable circularly ordered $G$-system. 
In particular this is true for $G=\Aut(\Q_{\circ})$. 
This theorem also applies to  
Thompson's (finitely generated, nonamenable) circular group $T$, which 
acts ultrahomogeneously on $D_{\circ}$, the set of dyadic rationals in the circle.  

Again the idea is to regard the requirement that $M(G)$ 
be a c-ordered set as a relaxation of the requirement that $M(G)$ be
a trivial system.
In this sense Theorem \ref{main-ultra} is indeed an analog of Pestov's theorem because 
every linearly ordered minimal compact $G$-system is necessarily trivial.

In contrast to linear orders, the class of minimal compact circularly ordered $G$-spaces 
is quite large. For instance, the study of minimal subsystems of the circle $\T$ 
with an action defined by 
a c-order preserving homeomorphism goes back to classical works of Poincar\'{e}, 
Denjoy and Markley. 
In symbolic dynamics we have extensive studies of 
Sturmian like systems which are c-ordered, \cite{GM-c}. 
Finally, by another theorem of Pestov, 
the universal minimal system $M(H_+(\T))$ of the Polish group $H_+(\T)$, 
of c-order preserving homeomorphisms of the circle $\T$, 
is $\T$ itself with the tautological action,~\cite{Pe-b}.  

In Theorem \ref{thm;ep} we describe, for groups $G$ as in Theorem \ref{main-ultra},
the dynamics of the system $M(G)$, and show that
it is extremely proximal (whence $M(G)$ is the universal strongly proximal $G$-system).
We also deduce that the group $G$ must contain a non-abelian free group.

\vspace{.3cm}

One of the reasons we think of a c-ordered dynamical system as being a relaxation of being trivial is the fact we proved in \cite{GM-c}, that every 
c-ordered compact, not necessarily metrizable, $G$-space $X$
is tame. In fact we prove there a stronger result, namely that such a system 
can always be represented on a \emph{Rosenthal Banach space}. 
We refer to \cite{GM-rose,GM-c,GM-tLN} for more information about tame dynamical systems. 
See also Remark \ref{r:collapsing}. 

\vspace{.3cm}

In Section \ref{Sec:gener} we prepare the ground for our main 
results which are proved in Section \ref{Sec:main}.  
In Section \ref{Fraisse} we 
present an alternative proof of Theorem \ref{main-ultra}, in the countable case.
It employs the notion of Fra\"{i}ss\'{e} classes and the Kechris-Pestov-Todorcevic theory \cite{KPT} and has the advantage that it automatically yields an explicit description of $M(G)$.
Namely, in terms of \cite{GM-c}, $M(\Aut(\Q_{\circ}))$ is 
the system $\Sp(\T; \Q_\circ)$ obtained  from  the circle by 
splitting in two the rational points. 

In Section \ref{AC} we examine some other properties of 
the Polish group 
$\Aut(\Q_{\circ})$. 
This topological group
has the automatic continuity property (Lemma 
\ref{lem-automat}). This easily follows from the automatic continuity property of 
$\Aut(\Q_{<})$ established by Rosendal and Solecki \cite{RS}. As an application we get that the discrete group $\Aut(\Q_{\circ})$ is 
{\em metrically intrinsically c-ordered} 
(see Definition  \ref{d:int-tame} below).  
That is, every action of $G:=\Aut(\Q_{\circ})$ by homeomorphisms on a metric compact space, admits a closed $G$-invariant subsystem which 
is a circularly ordered $G$-subsystem, Corollary \ref {metr-conv-int}.

We also note that 
the group $\Aut(\Q_{\circ})$
is Roelcke precompact (Proposition \ref{t:Rolelckeprecompact}); 
and finally, a result of  Evans and Tsankov \cite{ET} implies that it has 
Kazhdan's property (T)
(see Corollary \ref{c:kazhdan}).

\sk

\nt \textbf{Acknowledment.} 
We would like to express our thanks to Vladimir Pestov for his 
influence and inspiration;  as well as for sending us the unpublished preprint \cite{Pest-pr18}.   
We are grateful to J. K. Truss for providing us the unpublished preprint \cite{Tr}, and to 
G. Golan for her comments about Thompson groups.

\sk

\section{Some generalizations of (extreme) amenability} \label{Sec:gener}

An action $G \times X \to X$ of a group $G$ on a set $X$ is \textit{effective} if $gx=x \ \forall x \in X$ is possible only for $g=e$, where $e$ is the identity of $G$.  
Below all compact spaces are Hausdorff. 
A compact space $X$ with a given continuous action $G \curvearrowright X$ of a topological group $G$ on $X$ is said to be a \textit{dynamical $G$-system}. 
 
Recall the classical definition from \cite{Ros0}: a sequence $f_n$ of real valued functions on a set $X$ 
is said to be \emph{independent} if
there exist real numbers $a < b$ such that
$$
\bigcap_{n \in P} f_n^{-1}(-\infty,a) \cap  \bigcap_{n \in M} f_n^{-1}(b,\infty) \neq \emptyset
$$
for all finite disjoint subsets $P, M$ of $\N$.

By A. K\"{o}hler's \cite{Koh} definition, a dynamical $G$-system $X$ is \textit{tame} if  
for every continuous real valued function 
$f: X \to \R$ the family of functions $fG:=\{fg\}_{g \in G}$ 
does not contain an {\it independent sequence}.

The following result is a dynamical analog
of a well known Bourgain-Fremlin-Talagrand dichotomy \cite{BFT}.

\begin{thm} \label{D-BFT}
	\cite{GM1} 
	Let $X$ be a compact metric dynamical $S$-system and let $E=E(X)$ be its
	enveloping semigroup. We have the following alternative. Either
	\begin{enumerate}
		\item
		$E$ is a separable Rosenthal compact 
		
		\nt (hence $E$ is Fr\'echet and ${card} \; {E} \leq
		2^{\aleph_0}$); or
		\item
		the compact space $E$ contains a homeomorphic
		copy of $\beta\N$ 
		
		\nt (hence ${card} \; {E} = 2^{2^{\aleph_0}}$).
	\end{enumerate}
	The first possibility holds iff $X$ is a tame $S$-system.
\end{thm}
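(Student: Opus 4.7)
The plan is to derive this dynamical dichotomy from the classical Bourgain--Fremlin--Talagrand (BFT) dichotomy together with Rosenthal's $\ell_1$-theorem. Since $X$ is a compact metric space, I would fix a countable family $\{f_n\}_{n\in\N} \subset C(X)$ that separates the points of $X$; this identifies $X$ with a subspace of the Polish space $[0,1]^{\N}$ and lets us view the enveloping semigroup $E = E(X) \subseteq X^X$ as a pointwise compact family of functions from $X$ into a Polish space, which is the standard setting in which BFT operates.

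Next I would invoke the classical BFT dichotomy: a pointwise compact set $K$ of Borel real-valued functions on a Polish space is either a Rosenthal compactum --- consisting of Baire class $1$ functions, sequentially compact, Fr\'echet, and of cardinality at most $2^{\aleph_0}$ --- or else it contains a homeomorphic copy of $\beta\N$, which forces cardinality $2^{2^{\aleph_0}}$. The engine behind BFT is Rosenthal's theorem: every bounded sequence of real functions on $X$ admits either a pointwise convergent subsequence or an independent subsequence (in the sense recalled just before the theorem). Applying this coordinate-wise to $E \subseteq X^X$ through the separating functions $\{f_n\}$, the dichotomy transfers to $E$ itself: if $E$ is not Rosenthal compact, one extracts a sequence $p_n \in E$ and some $f \in C(X)$ such that $\{f\circ p_n\}$ is an independent sequence of functions on $X$, which in turn yields a copy of $\beta\N$ inside $E$.

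It remains to match this characterization with tameness. One direction is immediate: if $X$ is not tame, there exist $f \in C(X)$ and $g_n \in S$ such that $\{f\circ g_n\}$ is independent; the corresponding $g_n \in E$ then have no pointwise convergent subsequence, so $E$ must fall into case (2). Conversely, if $E$ fails to be Rosenthal compact, the previous paragraph produces $p_n \in E$ and $f \in C(X)$ with $\{f\circ p_n\}$ independent with witnesses $a < b$; since $S$ is pointwise dense in $E$ and independence is a condition involving only finitely many functions at once, a diagonal approximation argument replaces each $p_n$ by some $g_n \in S$ while preserving independence for slightly shrunken constants $a' < a < b < b'$, showing that $X$ is not tame.

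The main technical obstacle is precisely this last diagonalization: one must preserve the quantitative witnesses of independence when passing from arbitrary elements of $E$ down to elements of the acting semigroup $S$. Once this transfer is handled, the additional features listed in the statement --- the Fr\'echet/angelic character, the Baire class $1$ realization, and the cardinality estimates in both alternatives --- are inherited directly from the standard structure theory of Rosenthal compacta and follow without further dynamical input.
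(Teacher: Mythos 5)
The paper does not prove this theorem; it is quoted verbatim from \cite{GM1}, so the only meaningful comparison is with the proof given there. Your overall strategy --- embed $E\subseteq X^X$ into a countable product $\prod_n \overline{f_nS}^{\,p}$ via a countable separating (dense) family $\{f_n\}\subseteq C(X)$ and then invoke the classical Bourgain--Fremlin--Talagrand dichotomy and Rosenthal's $\ell_1$-theorem --- is exactly the route taken in \cite{GM1}. But the step you yourself flag as ``the main technical obstacle'' is a genuine gap, not a routine diagonalization. Having extracted $p_n\in E$ and $f\in C(X)$ with $\{f\circ p_n\}$ independent, you propose to replace each $p_n$ by some $g_n\in S$ using pointwise density of $S$ in $E$. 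The problem is quantitative: for a \emph{fixed} $n$, the independence of $\{f\circ g_n\}$ requires control of $f g_n$ at the witness points $x_{P,M}$ for \emph{every} finite disjoint pair $(P,M)$ with $n\in P\cup M$, and there are infinitely many such pairs; pointwise density only lets you match $p_n$ at finitely many prescribed points. Re-choosing witnesses after fixing the $g_n$ is circular, and ordering the construction by $\max(P\cup M)$ does not help, since $g_n$ must serve pairs chosen after it. So the transfer of independence from $E$ down to $S$, as described, does not go through.

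The fix is to never leave $S$: apply Rosenthal's theorem and BFT to the family $A=fS$ of \emph{continuous} functions on the Polish space $X$ directly. The classical theorems give the dichotomy at the level of $A$ itself: either $A$ contains no independent sequence, in which case every sequence in $A$ has a pointwise convergent subsequence and $\overline{A}^{\,p}$ is a Rosenthal compactum contained in $B_1(X)$; or $A$ already contains an independent sequence $\{fg_k\}$ with $g_k\in S$, in which case $\overline{\{fg_k\}}^{\,p}\cong\beta\N$ and one pulls a copy of $\beta\N$ back into $E$ via the continuous surjection $\overline{\{g_k\}}\to\overline{\{fg_k\}}$ together with the projectivity of $\beta\N$ among compact Hausdorff spaces. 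With this rearrangement the equivalence with tameness is immediate from K\"ohler's definition and no approximation argument is needed. Two smaller omissions: you should justify separability of $E$ (take a countable uniformly dense subset of $j(S)\subseteq C(X,X)$; it is pointwise dense in $E$), and you should record that countable products and closed subspaces of Rosenthal compacta are again Rosenthal compacta, which is what makes the coordinatewise reduction legitimate.
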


A dynamical $G$-system $X$ is said to be circularly (linearly) ordered if $X$ is a circularly (linearly) ordered space and each element of $G$ preserves the circular (linear) order,  \cite{GM-c}. 
In Section \ref{s:c} we give some background about circular order. 

A topological group $G$ is said to be \textit{nonarchimedean} if it has a
base of open neighbourhoods of the identity consisting of (clopen) subgroups. 
Equivalently, the groups which can be embedded into the symmetric groups $S_X$. 

As usual, $M(G)$ denotes the \textit{universal minimal $G$-system} of a topological group $G$. 
By $I\!A(G)$ we denote the \textit{universal irreducible affine $G$-system}, \cite{Gl-book1}. 
Recall that a topological group $G$ is \textit{amenable} (\textit{extremely amenable}) iff $I\!A(G)$ 
(respectively, $M(G)$) is trivial. The following definition proposes some generalizations.

\begin{defin} \label{d:int-tame} \cite{GM-tLN}
	A topological group $G$ is said to be:
	\ben 
	\item 
	\emph{Intrinsically tame} if the universal minimal $G$-space $M(G)$ is tame. 
	Equivalently, if  
	every continuous action of $G$ on a compact space $X$ 
	has a closed $G$-subspace $Y$ which is tame. 
	\item 
	\emph{Intrinsically c-ordered} 
	if $M(G)$ is a c-ordered $G$-system. 
	
	\item  
	\emph{Convexly intrinsically tame} 
	if the universal irreducible affine $G$-system $I\!A(G)$ is tame. 
	Equivalently, if every continuous affine action on a compact convex space $X$ has a closed 
	(not necessarily affine) 
	$G$-subspace $Y$ which is tame. 
	
	\item \emph{Convexly intrinsically c-ordered} if every continuous affine action on a compact convex space $X$ has a closed $G$-subspace $Y$ which is c-ordered.
	\item For brevity we use the following short names: \textit{int-tame, int-c-ord,  conv-int-tame, conv-int-c-ord.}
	
	\item If in (1) we consider only \emph{metrizable} spaces $X$ then we say that $G$ is \textit{metrically intrinsically tame} (in short: \textit{metr-int-tame}). 
	Similarly can be defined also 
	the notions of {\em{metrically intrinsically c-ordered}},  and {\em{metrically convexly intrinsically ordered}} groups.
	
	\een
\end{defin}

\begin{remark} \label{r:diagrams} 
	By results of \cite{GM-c} every c-ordered $G$-system is tame.  
	Thus, we have the following diagram of implications:
	\begin{equation*}
	\xymatrix
	{
		\text{extr. amenability}\  \ar@2{->} [d] \ar@2{->}[r]\  & \text{int-c-ord}\   \ar@2{->} [d] \ar@2{->}[r] &  \ \text{int-tame} \ar@2{->}[d] \ar@2{->}[r] &  \text{metr-int-tame}  \ar@2{->}[d]\  & \\
		\text{amenability}  \ar@2{->}[r] &  \text{conv-int-c-ord} \ar@2{->}[r] &   \text{conv-int-tame} \ar@2{->}[r] &  \text{metr-conv-int-tame}
	}
	\end{equation*}
\end{remark}

\begin{examples} \label{r:oldEx} \cite{GM-tLN} 
	\ben 
	\item $\mathrm{SL}_n(\R)$, $n >1$ 
	(more generally, any semisimple Lie group $G$ with finite center and no compact factors) 
	are conv-int-tame nonamenable topological groups which are not int-tame. Moreover,  $\mathrm{SL}_2(\R)$ is int-c-ordered.  
	\ssk 
	Sketch: 
	by Furstenberg's result \cite{Furst-63-Poisson} 
	the universal minimal strongly proximal system
	$M_{sp}(G)$ is the homogeneous space $X=G/P$, where $P$ is a minimal parabolic 
	subgroup (see \cite{Gl-book1}).
	Results of Ellis  and Akin 
	(Example \cite[Example 6.2.1]{GM-tLN})  
	show that the enveloping semigroup $E(G,X)$ in this case is a Rosenthal compact space, 
	whence the  system $(G,X)$ is tame by the dynamical BFT dichotomy (Theorem \ref{D-BFT}). 
	
	Note that in the case of $G=\mathrm{SL}_2(\R)$ it follows that
	in any compact \textit{affine} $G$-space one can find either a
	1-dimensional real projective $G$-space
	(a copy of the circle) or a fixed point.  	
	For general $\mathrm{SL}_n(\R)$ -- flag manifolds and their $G$-quotients. 	
	
	\item 
	The	Polish group $H_+(\T)$ is an int-c-ord nonamenable topological group. 
	Note that for $G=H_+(\T)$ every compact $G$-space $X$ contains 
	either a copy of $\T$, as a  $G$-subspace, or a $G$-fixed point. 
	\ssk 
	Sketch: 
	This follows from Pestov's theorem \cite{Pes98}, which
	identifies $(G,M(G))$ for $G = H_+(\T)$ as the tautological action of $G$ on $\T$.  
	
	\item
	The	Polish groups $\Aut(\mathbf{S}(2))$ and 
	$\Aut(\mathbf{S}(3))$ of automorphisms 
	of the circular directed graphs $\mathbf{S}(2)$ and $\mathbf{S}(3)$, are intrinsically c-ordered (hence, also int-tame). 
	The universal minimal $G$-systems for  $\Aut(\mathbf{S}(2))$ and  
	$\Aut(\mathbf{S}(3))$ are computed by L. Nguyen van Th\'{e} in \cite{van-the}. 
	
	\item 
	The Polish group $H(C)$, of homeomorphisms of the Cantor set,
	is not conv-int-tame.
	\item
	The Polish group $G=S_{\infty}$, of permutations of the natural numbers, 
	is amenable,  
	hence conv-int-tame but not int-tame. 
	\een
\end{examples}

\sk 

Given a class $\textbf{P}$ of compact $G$-systems one can define the notions  
``intrinsically $\textbf{P}$ group" and  ``convexly intrinsically 
$\textbf{P}$ group" in a manner analogous to the one we adopted for $\textbf{P}=$Tame. 

Recall that the following inclusion relations are valid 

\centerline{AP $\subset$ WAP $\subset$ HNS $\subset$ Tame} 
\nt where, AP = almost periodic (equivalently, equicontinuous) $G$-systems, 
WAP = weakly almost periodic systems, HNS = hereditarily nonsensitive.  
For the definitions of $Asp(G)$, HNS and WAP see for example \cite{GM-tLN}.
By the dynamical BFT dichotomy \cite{GM-rose, GM-tLN}, the class Tame of dynamical systems, plays a special role being, in a sense, the largest class of all ``small" systems. 

Note also that If $\textbf{P}$ is the class of all metrizable $G$-systems then 
$G$ is int-$\textbf{P}$ if and only if $M(G)$ is metrizable.

\begin{remark} \label{r:collapsing} 
	
	It turns out that 
	in this terminology a topological group is convexly intrinsically HNS 
	(and, hence, also conv-int WAP) iff it is amenable. 
	This follows from the fact that every HNS minimal $G$-system is almost periodic; 
	see \cite[Prop. 7.18]{Me-nz} and \cite[Lemma 9.2.3]{GM1}. 
	Thus we have
	
	\centerline{int-AP = int-WAP = int-HNS. } 
	
	\nt Also, by the left amenability of the algebra $Asp(G)$, 	
	which corresponds to the class of HNS systems,
	\cite{GM-fpt}, we get
	
	\centerline{amenability = conv-int-AP = conv-int-WAP = conv-int-HNS}
	
	\nt	
	
	\sk

	This ``collapsing effect" inside HNS and the exceptional role of tameness in 
	the dynamical BFT dichotomy 
	suggest that the  
	notion of convex intrinsic tameness is a natural generalization of amenability.
	This is also supported by several natural examples 
	(see Examples \ref{r:oldEx} and Corollary \ref{c:examples}). 
\end{remark}

\sk

\section{Circular order, topology and inverse limits} 
\label{s:c}

In this section we give some technical results about circular order which we use in Section \ref{Sec:main}. 
For more information and properties we refer to \cite{GM-c}. 

\begin{defin} \label{newC} \cite{Kok,Cech} 
	Let $X$ be a set. A ternary relation $R \subset X^3$ on $X$ is said to be a {\it circular} (or, sometimes, \emph{cyclic}) order  
	if the following four conditions are satisfied. It is convenient sometimes to write shortly $[a,b,c]$ instead of $(a,b,c) \in R$. 
	\ben
	\item Cyclicity: 
	$[a,b,c] \Rightarrow [b,c,a]$;  
	
	\item Asymmetry: 
	$[a,b,c] \Rightarrow (b, a, c) \notin R$; 
	
	\item Transitivity:    
	$
	\begin{cases}
	[a,b,c] \\
	[a,c,d]
	\end{cases}
	$ 
	$\Rightarrow [a,b,d]$;
	
	\item Totality: 
	if $a, b, c \in X$ are distinct, then \ $[a, b, c]$ 
	or $[a, c, b]$. 
	\een
\end{defin}

Observe that under this definition $[a,b,c]$ implies that $a,b,c$ are distinct. 

For 
$a,b \in X$ define the (oriented) \emph{intervals}: 
$$
(a,b)_R:=\{x \in X: [a,x,b]\}, \ \  [a,b]_R:=(a,b) \cup \{a,b\}, \ \ [a,b)_R:=(a,b) \cup \{a\}. 
$$
Sometimes we drop the subscript, or write $(a,b)_o$ when context is clear. 
Clearly, 
$X \setminus [a,b]_R=(b,a)_R$ for $a \neq b$ and $X \setminus [a,a]_R= X \setminus \{a\}$.


\begin{remark} \label{r:chech} \ \cite[page 35]{Cech}
	\ben 
	\item 
	Every linear order $<$ on $X$ defines a \emph{standard circular order} $R_{<}$ on $X$ as follows: 
	$[x,y,z]$ iff one of the following conditions is satisfied:
	$$x < y < z, \ y < z < x, \  z < x < y.$$	
	\item (cuts) Let $(X,R)$ be a c-ordered set and $z \in X$. 
	For every $z \in X$ the relation 
	$$z <_z a, \ \ \ \  a <_z b \Leftrightarrow [z,a,b] \ \ \ \forall a \neq b \neq z \neq a$$
	is a linear order on $X$ and $z$ is the least element. This linear order 
	restores the original circular order. Meaning that $R_{<_z}=R$. 
	\een
\end{remark}

The following two technical results are easy to verify.

\begin{prop} \label{Hausdorff}  \
	\ben 
	\item 
	For every c-order $R$ on $X$ the family of subsets
	$${\mathcal B}_1:=\{X \setminus [a,b]_R : \ a,b \in X\} \cup \{X\}$$  
	forms a base for a topology $\tau_R$ on $X$ which we call the \emph{interval topology} of $R$. 
	\item If $X$ contains at least three elements then the (smaller) family of intervals  
	$${\mathcal B}_2:=\{(a,b)_R : \  a,b \in X, a \neq b\}$$
	forms a base for the same topology $\tau_R$ on $X$. 
	\item The interval topology $\tau_R$ of every circular order $R$ is Hausdorff. 
	\een 
\end{prop}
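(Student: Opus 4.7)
The plan is to handle each of the three parts in sequence, with each argument driven by the cut construction of Remark~\ref{r:chech}(2), which converts local questions about $R$ into standard questions about a linear order.

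For (1), I would verify the two base axioms. The covering axiom is trivial since $X\in\mathcal{B}_1$. For the intersection axiom, given $x \in (X\setminus[a_1,b_1]_R)\cap(X\setminus[a_2,b_2]_R)$, I linearize at $x$ so that $x$ is the least element of $<_x$; the condition $x\notin[a_i,b_i]_R$ then translates to $x<_x a_i\leq_x b_i$. Setting $a:=\min_{<_x}(a_1,a_2)$ and $b:=\max_{<_x}(b_1,b_2)$ yields a closed interval $[a,b]_R$ containing both $[a_i,b_i]_R$ yet excluding $x$, so $X\setminus[a,b]_R$ is the desired base neighborhood. The degenerate case $a_i=b_i$ (where $[a_i,a_i]_R=\{a_i\}$) fits the same framework.

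For (2), the identity $(a,b)_R = X\setminus[b,a]_R$ for $a\neq b$ (a direct consequence of totality and asymmetry, which partition $X$ into the two open arcs and $\{a,b\}$) shows $\mathcal{B}_2\subseteq\mathcal{B}_1$. Conversely, the only members of $\mathcal{B}_1$ not automatically in $\mathcal{B}_2$ are $X$ itself and the singleton complements $X\setminus\{a\}=X\setminus[a,a]_R$; each should be shown to be a union of open intervals. The hypothesis $|X|\geq 3$ enters at exactly this point: for any point $x$ and any two auxiliary points $p,q\neq x$, totality places $x$ in either $(p,q)_R$ or $(q,p)_R$, and when $x\neq a$ one may take $p=a$ (or $q=a$) so that the enclosing interval avoids $a$.

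For (3), the cases $|X|\leq 2$ are immediate: $\mathcal{B}_1$ already contains all singleton complements, so $\tau_R$ is discrete. For $|X|\geq 3$, given distinct $x,y$, pick $z \in X \setminus\{x,y\}$ and linearize at $z$, assuming WLOG $x <_z y$. A short case check against the three clauses defining $R_{<_z}$ shows that for every $c\in X\setminus\{z\}$ the intervals $(z,c)_R$ and $(c,z)_R$ coincide with the complementary linear sets $\{w: z<_z w<_z c\}$ and $\{w: w>_z c\}$, and hence are automatically disjoint. If there exists $c$ with $x<_z c<_z y$, the pair $((z,c)_R,(c,z)_R)$ separates $x$ from $y$; otherwise $y$ is the immediate $<_z$-successor of $x$, and the pair $((z,y)_R,(x,z)_R)$ does the job, since the absence of elements strictly between $x$ and $y$ forces the two sets' intersection to be empty. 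The main bookkeeping here is the dictionary between circular and linear intervals under the cut; once in hand, Hausdorffness reduces to the elementary fact that any linearly ordered set is Hausdorff in its order topology.
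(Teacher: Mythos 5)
Your proof is correct and complete. The paper itself gives no argument for Proposition \ref{Hausdorff} (it is dismissed as ``easy to verify''), but your systematic use of the cut $<_z$ from Remark \ref{r:chech}(2) --- translating $x\notin[a_i,b_i]_R$ into $x<_x a_i\leq_x b_i$ for the base axiom, identifying $(z,c)_R$ and $(c,z)_R$ with complementary linear intervals for Hausdorffness, and invoking $|X|\geq 3$ exactly where the singleton complements $X\setminus\{a\}$ must be covered by open arcs --- is precisely the intended verification and fills the gap cleanly.
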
 

\sk
\begin{lem} \label{c-isclosed} 
	Let $R$ be a circular order on $X$ and $\tau_R$ the induced (Hausdorff) topology. 
	Then for every  $[a,b,c]$ there exist neighborhoods $U_1,U_2,U_3$ of $a,b,c$ respectively such that $[a',b',c']$ for every $(a',b',c') \in U_1 \times U_2 \times U_3.$  
\end{lem}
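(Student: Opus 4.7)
The claim is precisely that the ternary relation $R \subseteq X^3$ is open in the product topology; equivalently, given $[a,b,c]$, I need to exhibit $\tau_R$-open neighborhoods $U_1 \ni a$, $U_2 \ni b$, $U_3 \ni c$ with $U_1 \times U_2 \times U_3 \subseteq R$. My plan is to reduce to the linear-order situation via the cut construction of Remark \ref{r:chech}(2). Let $<$ denote $<_c$; then $c$ is the least element of $<$ and, since $[a,b,c]$ implies $[c,a,b]$ by cyclicity, one has $c < a < b$.

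In the generic case, when each of the three open arcs $(a,b)_R$, $(b,c)_R$, $(c,a)_R$ contains a point, I would choose separators $p \in (a,b)_R$, $q \in (b,c)_R$, $r \in (c,a)_R$; translating these via the cut gives $c < r < a < p < b < q$ in $<$. I then set
\[
U_1 := (r,p)_R, \qquad U_2 := (p,q)_R, \qquad U_3 := (q,r)_R.
\]
Each is a basic $\tau_R$-open set from Proposition \ref{Hausdorff}, and the cyclic/linear dictionary yields $a \in U_1$, $b \in U_2$, $c \in U_3$. In linear terms, $U_1 = (r,p)_<$ and $U_2 = (p,q)_<$ are ordinary open intervals, while $U_3 = [c,r)_< \cup (q,\rightarrow)_<$ is the ``wrap-around'' neighborhood of $c$ through the minimum; the three sets are pairwise disjoint. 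For any $(a',b',c') \in U_1 \times U_2 \times U_3$, membership in $U_3$ splits into two sub-cases: either $c' \in [c,r)_<$, giving $c' \le a' < b'$, or $c' \in (q,\rightarrow)_<$, giving $a' < b' < c'$. In either case (including the boundary situation $c' = c$), the linear-to-cyclic translation of Remark \ref{r:chech}(1) delivers $[a',b',c']$.

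The main obstacle, and the remaining work, is to handle the degenerate cases in which one or more of the three arcs is empty, because then the ``natural'' separator is missing. The plan is to substitute the corresponding endpoint for each vanishing arc: for instance, if $(c,a)_R = \emptyset$ then $a$ is the immediate successor of $c$ in $<$, and taking $r := c$ keeps $U_1 = (c,p)_R = (c,p)_<$ a valid neighborhood of $a$, while $U_3$ must be redefined as an appropriate $(q,y)_R$ that still contains $c$. At the extreme $X = \{a,b,c\}$ the base $\mathcal{B}_1$ immediately shows that $\tau_R$ is discrete, so singleton neighborhoods suffice. A short case analysis based on which of $(a,b)_R$, $(b,c)_R$, $(c,a)_R$ are empty, with each subcase modeled on the generic argument above, completes the proof. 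The verification in each subcase is routine (via the same cut-translation bookkeeping) but somewhat tedious, which is why I view the case analysis, rather than any one conceptual step, as the real obstacle.
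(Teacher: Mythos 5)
The paper does not actually prove Lemma \ref{c-isclosed}: it is introduced (together with Proposition \ref{Hausdorff}) by the phrase ``the following two technical results are easy to verify,'' so there is no argument of record to compare yours against. Judged on its own, your proof is correct. The reduction via the cut $<_c$ of Remark \ref{r:chech}(2) is exactly the natural move; the generic case with separators $p,q,r$ and the intervals $(r,p)_R$, $(p,q)_R$, $(q,r)_R$ is fully and correctly verified (including the wrap-around description of $U_3$ and the boundary case $c'=c$), and your observation that separators are genuinely needed is right --- the naive choice $U_1=(c,b)_R$, $U_2=(a,c)_R$, $U_3=(b,a)_R$ fails because those arcs overlap. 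The only thing left implicit is the degenerate casework, and your template for it (replace a missing separator by the adjacent endpoint, e.g.\ $r:=c$ and $U_3:=(q,a)_R=\{c\}\cup(q,\to)_{<_c}$ when $(c,a)_R=\emptyset$) does work in each subcase. If you want to avoid the case split entirely, you can state the choice uniformly once and for all: pick $r\in(c,a)_R$ if that arc is nonempty and $r:=c$ otherwise, $p\in(a,b)_R$ if nonempty and $p:=b$ otherwise, $q\in(b,c)_R$ if nonempty and $q:=c$ otherwise (and the analogous endpoints for $U_2$, $U_3$); the same cut-translation bookkeeping then covers all cases, down to the extreme $X=\{a,b,c\}$ where the three intervals degenerate to singletons. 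So: no gap in substance, just an optional tidying of the final case analysis.
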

%
%
%
%
%
%
%
%
%
%
%
%
%
%
%


\sk 

Denote by $C_n:=\{1,2, \cdots, n\}$ the standard $n$-cycle with the natural circular order.  
Let $(X,R)$ be a c-ordered set. 
We say that a vector
$(x_1,x_2, \cdots, x_n) \in X^n$ is a \textit{cycle} in $X$ if it satisfies the following two conditions: 

\ben 
\item For every $[i,j,k]$ in $C_n$ and \textit{distinct} 
$x_i, x_j, x_k$ we have $[x_i,x_j,x_k]$; 
\item $x_i=x_k \ \Rightarrow$ \ 
$(x_i=x_{i+1}= \cdots =x_{k-1}=x_k) \ \vee \ (x_k=x_{k+1}=\cdots =x_{i-1}=x_i).$  
\een  
\textit{Injective cycle} means that all $x_i$ are distinct. 

\begin{defin} \label{c-ordMaps} Let $(X_1,R_1)$ and  $(X_2,R_2)$ be c-ordered sets. 
	A function $f: X_1 \to X_2$ is said to be {\it c-order preserving} 
	if $f$ moves every cycle to a cycle. 
	Equivalently, if it satisfies the following two conditions:
	
	\ben 
	\item For every $[a,b,c]$ in $X$ and \textit{distinct} 
	$f(a), f(b), f(c)$ we have $[f(a), f(b), f(c)]$; 
	\item If $f(a)=f(c)$ then $f$ is constant on one of the closed intervals $[a,c], [c,a]$. 
	\een 
	%
\end{defin}


\begin{lem} \label{invLimCirc}
	Let $X_{\infty}:=\underleftarrow{\lim} (X_i,I)$ be the inverse limit of the inverse system 
	$$\{f_{ij} \colon X_j \to X_i, \ \ i \leq j, \ i,j \in I\}$$ where $(I,\leq)$ 
	is a directed poset. Suppose that every $X_i$ is a c-ordered set with the c-order $R_i \subset X_i^3$ and each bonding map $f_{ij}$ is
	c-order preserving. On the inverse limit $X_{\infty}$ define a ternary relation $R$ as follows. 
	An ordered triple $(a,b,c) \in X_{\infty}^3$ belongs to $R$ iff $[p_i(a),p_i(b),p_i(c)]$ is in $R_i$ for some $i \in I$.  Then 
	$R$ is a c-order on $X_{\infty}$ and each projection map 
	$$p_i \colon X_{\infty} \to X_i, \ p_i(a)=a_i$$ is c-order preserving.
	
\end{lem}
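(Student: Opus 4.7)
The plan is to verify the four axioms of circular order (cyclicity, asymmetry, transitivity, totality) for $R$ on $X_\infty$, and then separately check both conditions of Definition \ref{c-ordMaps} for every projection $p_i$. The crux of everything is a single \emph{propagation lemma}: if $[p_i(a),p_i(b),p_i(c)]$ holds in $X_i$, then $[p_k(a),p_k(b),p_k(c)]$ holds in $X_k$ for every $k \geq i$. Indeed, the three points $p_i(a),p_i(b),p_i(c)$ are distinct (this is forced by the c-order relation), hence $p_k(a),p_k(b),p_k(c)$ are distinct as well since $p_i = f_{ik}\circ p_k$. Totality at level $k$ leaves two options; if the wrong one $[p_k(a),p_k(c),p_k(b)]$ occurred, then c-order preservation of $f_{ik}$ together with distinctness of the images would yield $[p_i(a),p_i(c),p_i(b)]$, contradicting asymmetry of $R_i$.

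With this tool in hand, cyclicity of $R$ is immediate from cyclicity of each $R_i$. Asymmetry and transitivity reduce to the corresponding axioms at a single large level: given witnesses for the hypotheses at indices $i,j$, pick a common upper bound $k$ via directedness of $I$, propagate both witnesses up to $k$, and close the argument inside $X_k$. Totality requires slightly more work: given distinct $a,b,c \in X_\infty$, first use the definition of inverse limit to find indices separating each pair, then take a common upper bound $k$ at which all three projections are pairwise distinct, and finally apply totality in $X_k$ followed by the definition of $R$.

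For c-order preservation of $p_i$, condition (1) of Definition \ref{c-ordMaps} follows directly from the propagation lemma plus c-order preservation of $f_{ik}$: given $[a,b,c]$ witnessed at some $j$ with $p_i(a),p_i(b),p_i(c)$ distinct, propagate to any $k \geq i,j$ and push down by $f_{ik}$. Condition (2) is the delicate point. Suppose $p_i(a)=p_i(c)=:y$ and, for a contradiction, that $p_i$ is non-constant on both closed arcs $[a,c]_R$ and $[c,a]_R$. Then one finds $x\in(a,c)_R$ and $x'\in(c,a)_R$ with $p_i(x)\neq y$ and $p_i(x')\neq y$. Choose $k \geq i$ large enough to witness the two c-order relations $[a,x,c]$ and $[c,x',a]$. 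The map $f_{ik}$ identifies the distinct points $p_k(a)$ and $p_k(c)$, so by its c-order preserving property it must be constant with value $y$ on one of the two closed arcs between them in $X_k$. But $p_k(x)$ lies in the open interval $(p_k(a),p_k(c))_{R_k}$ and $p_k(x')$ in the open interval $(p_k(c),p_k(a))_{R_k}$, so whichever arc $f_{ik}$ collapses forces either $p_i(x)=y$ or $p_i(x')=y$, a contradiction.

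The main obstacle is condition (2) of c-order preservation for the projections: it is not a purely combinatorial triple-to-triple statement but rather an assertion about whole arcs, and one must carefully align the two ``arc'' descriptions in $X_\infty$ and $X_k$ via a common witnessing level. Everything else reduces through directedness to statements at a single finite level, where the hypotheses on $R_i$ and on the bonding maps apply verbatim.
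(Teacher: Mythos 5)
Your proposal is correct and follows essentially the same route as the paper: your ``propagation lemma'' is the paper's Claim 2 (stated in its upward form), the separating-index step for totality is the paper's Claim 1, and your arc-collapsing contradiction for condition (2) of the projections is exactly the paper's argument that $f_{ik}$ would otherwise violate condition (2) of Definition \ref{c-ordMaps}. No gaps.
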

\begin{proof} 
	We start with 
	\sk 
	
	\nt	\textbf{Claim 1}: \textit{For every three distinct elements $a,b,c \in X_{\infty}$ there exists a separating projection 
		$p_i \colon X_{\infty} \to X_i$; 
		that is,  
		$a_i = p_i(a) ,b_i =p_i(b), c_i=p_i(c)$ are distinct.} 
	\sk 
	
	Indeed, since $a,b,c \in X_{\infty}$ are distinct there exist indexes $j(a,b), j(a,c), j(b,c) \in I$ such that 
	$$a_{j(a,b)} \neq b_{j(a,b)}, \  a_{j(a,c)} \neq c_{j(a,c)}, \ b_{j(b,c)} \neq c_{j(b,c)}.$$
	
	Since $I$ is directed we may choose $i \in I$ which dominates all three indexes 
	$j(a,b)$, $j(a,c), j(b,c)$. Then $a_i,b_i,c_i$ are distinct. 
	
	\sk 
	
	\nt	\textbf{Claim 2}: \textit{If $[a_i,b_i,c_i]$ for some $i \in I$ and $a_j,b_j,c_j$ are distinct in $X_j$ for some $j \in J$ then $[a_j,b_j,c_j]$. }

	\sk 
	Indeed, choose an index $k \in I$ such that $i \leq k, j \leq k$ then $a_k,b_k,c_k$ are distinct. 
	Necessarily $[a_k,b_k,c_k]$. Otherwise, $[b_k,a_k,c_k]$ by the Totality axiom.  Then also $[b_i,a_i,c_i]$ because the bonding map $f_{ik}: X_k \to X_i$ is c-order preserving	and $a_i,b_i,c_i$ are distinct in $X_i$ (since $[a_i,b_i,c_i]$). 
	
	Since $[a_k,b_k,c_k]$ it follows that $[a_j,b_j,c_j]$ because the bonding map $f_{jk}: X_k \to X_j$ is c-order preserving. 
	
	\sk 

	Now we show that $R$ is a c-order (Definition \ref{newC})  on  $X_{\infty}$. 
	
	The Cyclicity axiom is trivial. 
	
	Asymmetry axiom is easy by Claim 2. 
	
	Transitivity: by Claims 1 and 2 there exists $k \in I$ such that $[a_k,b_k,c_k]$ and $[a_k,c_k,d_k]$. Hence, $[a_k,b_k,d_k]$ by the transitivity of $R_k$. Therefore, $[a,b,d]$ in $X_{\infty}$ by the definition of $R$. 
	
	Totality: 
	if $a, b, c \in X$ are distinct, then $a_j,b_j,c_j$ are distinct for some $j \in I$ by Claim 1. By the totality of $R_j$ we have $[a_j, b_j, c_j]$ $\vee$ $[a_j, c_j, b_j]$, 
	hence also $[a, b, c]$ $\vee$ $[a, c, b]$ in $R$.
	
	So, we proved that $R$ is a c-order on $X_{\infty}$. 
	
	\sk 
	Now we show that each projection $p_i \colon X_{\infty} \to X_i$ is c-order preserving.  Condition (1) of Definition \ref{c-ordMaps} is satisfied for every $i \in I$ 
	by Claim 2 and the definition of $R$. In order to verify condition (2) of Definition \ref{c-ordMaps} assume that $p_i(a)=p_i(b)$ for some distinct $a, b \in X_{\infty}$. 
	We have to show that $p_i$ is constant on one of the closed intervals $[a,b], [b,a]$. 
	If not then there exist $u,v \in X_{\infty}$ such that $[a,u,b], [b,v,a]$ but $p_i(u) \neq p_i(a) \ne p_i(v)$.  As in the proof of Claim 1 one may choose an index $k \in I$ such that the elements $p_k(a), p_k(b), p_k(u), p_k(v)$ are distinct in $X_k$. Then we get that the bonding map $f_{ik} \colon X_k \to X_i$ does not satisfy condition (2) of Definition \ref{c-ordMaps}.  
	This contradiction completes the proof.  
\end{proof}

\begin{lem} \label{invLimCircTop}
	In terms of Lemma \ref{invLimCirc} assume in addition that every $X_i$ is a compact c-ordered 
	space and each bonding map $f_{ij}$ is continuous. 
	Then the topological inverse limit $X_{\infty}$ is also a c-ordered (nonempty) 
	compact space.
\end{lem}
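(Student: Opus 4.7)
The plan is to first invoke the standard result that a cofiltered inverse limit of nonempty compact Hausdorff spaces with continuous bonding maps is itself nonempty compact Hausdorff; call this topology $\tau_\infty$ on $X_\infty$. By Lemma~\ref{invLimCirc} we already have a c-order $R$ on $X_\infty$ for which every projection $p_i$ is c-order preserving, and Proposition~\ref{Hausdorff}(3) guarantees that the associated interval topology $\tau_R$ is Hausdorff. It then suffices to show that $\tau_R$ agrees with $\tau_\infty$, after which $X_\infty$ is a c-ordered compact space in the sense of Section~\ref{s:c}.

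I would establish this by proving $\tau_R \subseteq \tau_\infty$ and finishing with a compactness argument. For the inclusion, it is enough to check that the basic sets of $\mathcal{B}_1$ from Proposition~\ref{Hausdorff} are $\tau_\infty$-open. When $a=b$ the set $X_\infty \setminus [a,b]_R$ equals $X_\infty \setminus \{a\}$, which is $\tau_\infty$-open because $\tau_\infty$ is Hausdorff. When $a\ne b$ the set equals $(b,a)_R$; take any $x \in (b,a)_R$, so $[b,x,a]\in R$. By the very definition of $R$ in Lemma~\ref{invLimCirc} there is some $i \in I$ with $[b_i,x_i,a_i]\in R_i$ (in particular $b_i,x_i,a_i$ are distinct). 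Apply Lemma~\ref{c-isclosed} inside $X_i$ to obtain open neighborhoods $U_1 \ni b_i$, $U_2 \ni x_i$, $U_3 \ni a_i$ such that $[b',y',a']\in R_i$ for every $(b',y',a') \in U_1\times U_2\times U_3$. Then $V := p_i^{-1}(U_2)$ is a $\tau_\infty$-open neighborhood of $x$; for any $z \in V$ the triple $(b_i,z_i,a_i)$ lies in $U_1\times U_2\times U_3$, so $[b_i,z_i,a_i]\in R_i$, and by the definition of $R$ we conclude $[b,z,a]\in R$, i.e.\ $z \in (b,a)_R$. Thus $(b,a)_R$ is $\tau_\infty$-open.

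To finish, the identity map $(X_\infty, \tau_\infty) \to (X_\infty, \tau_R)$ is a continuous bijection from a compact space onto a Hausdorff space, hence a homeomorphism, so $\tau_\infty = \tau_R$. Consequently $X_\infty$, equipped with its inverse limit topology, is a nonempty compact c-ordered space.

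The point I expect to require the most care is the inclusion $\tau_R \subseteq \tau_\infty$ at a fixed $x \in (b,a)_R$: the definition of $R$ only supplies a single witness index $i$ for the triple $(b,x,a)$, and one must use Lemma~\ref{c-isclosed} to upgrade that pointwise witness into an \emph{open} c-order relation at stage $X_i$, so that the preimage $p_i^{-1}(U_2)$ is a genuine $\tau_\infty$-neighborhood of $x$ contained in $(b,a)_R$. Everything else (nonemptiness, compactness, Hausdorffness, the compact-to-Hausdorff homeomorphism argument) is routine.
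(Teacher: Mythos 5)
Your proposal is correct and follows essentially the same route as the paper: both establish nonemptiness and compactness from the standard inverse limit theory, prove $\tau_R \subseteq \tau_\infty$ by exhibiting, for each point of a basic interval, a $\tau_\infty$-open neighborhood of the form $p_i^{-1}(\text{open set in } X_i)$ contained in that interval, and conclude via the compact-to-Hausdorff continuous bijection argument. The only cosmetic difference is that the paper takes the preimage of the whole interval $(u_i,v_i)_o$ (already open in $X_i$) directly, whereas you reach the same neighborhood through Lemma \ref{c-isclosed}.
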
 
\begin{proof} Let $\tau_{\infty}$ be the usual topology of the inverse limit $X_{\infty}$. 
	It is well known that the inverse limit $\tau_{\infty}$ of compact Hausdorff spaces is nonempty and compact Hausdorff. 
	Let $\tau_c$ be the interval topology  
	(see Proposition \ref{Hausdorff}) 
	of the c-order $R$ on $X_{\infty}$, 
	where $(X_{\infty},R)$ is 
	defined as in Lemma \ref{invLimCirc}. We have to show that $\tau_{\infty}=\tau_c$. Since $\tau_c$ is Hausdorff 
	it is enough to show that $\tau_{\infty} \supseteq \tau_c$. 
	This is equivalent to showing 
	that every interval $(u,v)_o$ is $\tau_{\infty}$-open in $X_{\infty}$ for every distinct $u,v \in X_{\infty}$, where   
	$$
	(u,v)_o:= \{x \in X| \ [u,x,v]\}. 
	$$ 
	Let $w \in (u,v)_o$; that is, $[u,w,v]$. By our definition of the c-order $R$ of $X_{\infty}$ we have $[u_i,w_i,v_i]$ in $X_i$ for some $i \in I$. 
	The interval $O_i:=(u_i,v_i)_o$  is open
	in $X_i$. Then its preimage $p_i^{-1}(O_i)$ is $\tau_{\infty}$-open in $X_{\infty}$. On the other hand, 
	$$w \in p_i^{-1}(O_i) \subseteq (u,v)_o.$$
	Indeed, if $x \in p_i^{-1}(O_i)$ then $p_i(x) \in (u_i,v_i)_o$. This means that 
	$[u_i,x_i,v_i]$ in $X_i$. By the definition of $R$ we get that $[u,x,v]$ in $X_{\infty}$. So, $x \in (u,v)_o$. 
\end{proof}

\begin{lem} \label{denseHom} \
	\ben 
	\item Let $\widehat{G}$ be the two-sided completion of a topological group $G$. 
	Then $G$ is int-tame (respectively, int-c-ordered) iff $\widehat{G}$ is int-tame (respectively, int-c-ordered).  
	\item 
	Let $h \colon G_1 \to G_2$ be a continuous dense homomorphism. 
	Then if $G_1$ is  int-tame (respectively, int-c-ordered), so is $G_2$.    
	\een
\end{lem}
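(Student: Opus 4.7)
Plan.

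For part (1), I would invoke the standard fact that every continuous action of a topological group $G$ on a compact Hausdorff space $X$ extends uniquely to a continuous action of its two-sided completion $\widehat{G}$ (the orbit maps $g\mapsto gx$ are uniformly continuous with respect to the two-sided uniformity). This yields an equivalence of the categories of compact $G$-systems and compact $\widehat{G}$-systems, under which $M(G)$ is canonically identified with $M(\widehat{G})$. Tameness of $X$ is encoded in the enveloping semigroup $E(X)=\overline{\{g_X:g\in G\}}^{X^X}$, whose closure is unaffected by passing from $G$ to the dense subgroup $\widehat{G}$; c-ordering is an intrinsic property of $X$, and by Lemma \ref{c-isclosed} the set of c-order preserving self-maps is closed in the pointwise topology, so $G$ acts by c-order automorphisms iff $\widehat{G}$ does. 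Together these give the iff in (1).

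For part (2), I view $M(G_2)$ as a compact $G_1$-flow via $h$. Density of $h(G_1)$ in $G_2$, combined with continuity of the $G_2$-action, forces every closed $G_1$-invariant subset of $M(G_2)$ to be $G_2$-invariant, so $M(G_2)$ is $G_1$-minimal. The universal property of $M(G_1)$ then furnishes a $G_1$-equivariant continuous surjection $\phi: M(G_1)\to M(G_2)$, which is automatically $G_2$-equivariant: approximate any $g_2\in G_2$ by a net $h(g^\alpha_1)\to g_2$ and pass to the limit in the identity $\phi(h(g^\alpha_1)x)=h(g^\alpha_1)\phi(x)$.

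For the int-tame conclusion of (2), tameness descends to $G_1$-equivariant factors, so $M(G_2)$ is tame as a $G_1$-system. Moreover, the enveloping semigroup of $M(G_2)$ inside $M(G_2)^{M(G_2)}$ is the closure of the same dense image whether generated by $G_1$ (via $h$) or by $G_2$, and tameness is intrinsic to this semigroup (Theorem \ref{D-BFT}); hence tameness transfers to the $G_2$-action.

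For the int-c-ordered conclusion of (2), the $G_1$-invariant c-order on $M(G_1)$ must be transported through $\phi$ to yield a $G_1$-invariant c-order on $M(G_2)$, and then Lemma \ref{c-isclosed} plus density of $h(G_1)$ in $G_2$ upgrades $G_1$-invariance to $G_2$-invariance. The main obstacle here is that the naive pushforward relation $[y_1,y_2,y_3]:=\exists x_i\in\phi^{-1}(y_i),\ [x_1,x_2,x_3]$ need not satisfy the asymmetry and transitivity axioms of Definition \ref{newC} when $\phi$ has non-trivial fibres. I would handle this by exploiting the structural result from \cite{GM-c} that in a minimal c-ordered $G$-system a $G$-equivariant factor map collapses only closed intervals (arcs); combined with the Hausdorffness of the interval topology (Proposition \ref{Hausdorff}(3)) and the minimality of $M(G_2)$, this forces the pushforward to be a genuine c-order. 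Once this is verified, the density/continuity argument closes the proof.
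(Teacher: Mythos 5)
Your part (1) is correct and follows essentially the paper's own route: extend the action to $\widehat{G}$ (the paper does this via the completeness of the homeomorphism group $H(X)$), identify $M(G)$ with $M(\widehat{G})$, transfer tameness through the coincidence of the pointwise closures of the orbit of a function (equivalently, of the enveloping semigroup), and transfer the c-order by approximating $t\in\widehat{G}$ with a net from $G$ and invoking Lemma \ref{c-isclosed} together with Totality.

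For part (2) you diverge from the paper, and the c-order half of your argument has a genuine gap. The paper deduces (2) from (1) in one line: $h(G_1)$ and $G_2$ have the same two-sided completion, so $M(G_2)$, $M(\widehat{G_2})$ and $M(h(G_1))$ are \emph{the same compact space} carrying compatible actions; the c-order sitting on it is then upgraded to $G_2$-invariance by the same density-plus-Lemma-\ref{c-isclosed} argument as in (1). No circular order ever has to be transported across a nontrivial factor map. You instead route through the $G_1$-factor map $\phi:M(G_1)\to M(G_2)$ and push the order forward. Your tameness transfer along $\phi$ is fine (tameness passes to factors, and for a fixed $G_2$-space the $G_1$- and $G_2$-function orbits have the same pointwise closure). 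But the structural claim you lean on --- that a $G$-equivariant factor map of a minimal c-ordered system collapses only closed arcs --- is false. Take $X=\T$ with the irrational rotation $R_\alpha$, $Y=\T$ with $R_{2\alpha}$, and $\phi(z)=z^2$: this is an equivariant factor map between minimal c-ordered $\Z$-systems whose fibres are antipodal pairs, not arcs, and the pushforward relation indeed violates asymmetry. (The target is still c-ordered here, but with an order that is \emph{not} the pushforward.) So your proposed repair does not close the hole you correctly identified, and the pushforward strategy should be abandoned in favour of the completion argument. A small additional typecheck issue: $M(G_1)$ is not a $G_2$-space, so the assertion that $\phi$ is ``$G_2$-equivariant'' does not parse; the limit $\lim_\alpha g_1^\alpha x$ need not exist in $M(G_1)$.
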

\begin{proof} (1) 
	Let $G \curvearrowright X$ be a continuous action on a compact Hausdorff space $X$. 
	Then we have the continuous extended action 
	$\widehat{G} \curvearrowright X$ induced by the continuous homomorphism 
	$\g \colon G \to H(X)$, where $H(X)$ 
	(the full homeomorphism group) 
	is always two-sided complete. 
	Also, $M(G)=M(\widehat{G})$. 
	Now observe that $\widehat{G} \curvearrowright X$ is tame (c-ordered) iff 
	$G \curvearrowright X$ is tame (c-ordered). 
	Indeed, in the case of tameness, observe that 
	$cl_p(f \widehat{G}) = cl_p(f G)$
	for every $f \in C(X)$ 
	and use the characterization (see \cite[Proposition 5.6]{GM-rose}) of tame functions. 
	
	For the case of c-order, let $X$ be a c-ordered $G$-system. 
	It is enough to show that $X$ is a c-ordered $\widehat{G}$-system. 
	Let $t \in \widehat{G}$ and assume $[x,y,z]$. 
	We have to show that $[tx,ty,tz]$. 
	Assuming the contrary, 
	by the Totality axiom (Definition \ref{newC}) 
	we have $[ty,tx,tz]$. Choose a net $g_i$ in $G$ which tends to $t$. 
	Clearly, $[g_ix,g_iy,g_iz]$. Finally apply Lemma \ref{c-isclosed} and the Totality axiom. 
	
	(2)  
	Follows easily from (1) because $h(G_1)$ and $G_2$ have the ``same" two-sided  completion.  	
\end{proof}

\sk 
\section{Ultrahomogeneous actions on circularly ordered sets} 
\label{Sec:main}

Now we introduce the following definition, a natural circular version of the ultrahomogeneity for linear orders.

\begin{defin}
	We say that an 
	effective  
	action of a group $G$ of c-order automorphisms on a c-ordered infinite set $X$ is 
	\emph{ultrahomogeneous} if every c-order isomorphism between two finite subsets can be extended to a c-order automorphism of $X$. 
	Let us say that a circularly ordered set $X$ is ultrahomogeneous if the action $H_+(X) \curvearrowright X$ is ultrahomogeneous. 
\end{defin}

\begin{lem} \label{lem;dense}
	If a group $G$ acts ultrahomogeneously on 
	an infinite circularly ordered set $X$, 
	then the order type of $X$ is dense; i.e. there are no vacuous intervals $(a,b)_R$.
\end{lem}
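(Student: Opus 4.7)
The plan is a short proof by contradiction. Suppose that $(a,b)_R = \emptyset$ for some $a \neq b$ in $X$. Since $X$ is infinite I can pick a point $c \in X \setminus \{a,b\}$. By the totality axiom of Definition \ref{newC}, either $[a,c,b]$ or $[a,b,c]$. The first option would put $c$ into $(a,b)_R$, contradicting the assumption, so $[a,b,c]$ must hold; equivalently $b \in (a,c)_R$, so in particular $(a,c)_R \neq \emptyset$.

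Next I would exploit the fact that a circular order imposes no constraint on sets of size at most two (the ternary relation only sees triples of distinct elements). Hence the bijection $\{a,b\} \to \{a,c\}$ sending $a \mapsto a$ and $b \mapsto c$ is automatically a c-order isomorphism between finite subsets of $X$. Applying ultrahomogeneity, I extend this bijection to a c-order automorphism $g$ of $X$ with $g(a) = a$ and $g(b) = c$.

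To finish, I would use that a c-order automorphism maps intervals bijectively onto intervals: for every $x \in X$ the relation $[a,x,b]$ holds iff $[g(a),g(x),g(b)]$ holds, since both $g$ and $g^{-1}$ preserve the ternary relation (this is immediate from clause (1) of Definition \ref{c-ordMaps}, and clause (2) is vacuous for a bijection). Therefore
\[
g\bigl((a,b)_R\bigr) \;=\; \bigl(g(a),g(b)\bigr)_R \;=\; (a,c)_R,
\]
and the left-hand side is empty while the right-hand side contains $b$, a contradiction.

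There is no real obstacle here; the argument is essentially one line once the two observations are in place, namely that two-point subsets carry no c-order data and that c-order automorphisms commute with forming intervals. If anything, the only thing worth stating carefully is the latter bijectivity on intervals, which I would record as a one-sentence justification referring back to Definition \ref{c-ordMaps}.
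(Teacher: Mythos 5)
Your proof is correct, but it takes a genuinely different route from the paper's. The paper argues by reduction to the linear case: it cuts the circular order at an arbitrary point $z$ (Remark \ref{r:chech}.2), observes that $X \setminus \{z\}$ with the induced linear order $<_z$ is an ultrahomogeneous linearly ordered set (a fact only verified carefully later, in the Claim inside the proof of Theorem \ref{main-ultra}, via the stabilizer $St(z)$), and then invokes the standard fact that ultrahomogeneous linear orders are dense. Your argument is direct and self-contained within the circular setting: you use only the Totality axiom to locate a nonempty interval $(a,c)_R$ and then ultrahomogeneity applied to the two-point subsets $\{a,b\}$ and $\{a,c\}$, whose induced ternary relations are empty, so the bijection $a\mapsto a$, $b\mapsto c$ is indeed vacuously a c-order isomorphism. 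The key step --- that a c-order automorphism carries $(a,b)_R$ bijectively onto $(g(a),g(b))_R$ --- is justified exactly as you say, by clause (1) of Definition \ref{c-ordMaps} applied to $g$ and $g^{-1}$. What your approach buys is independence from the linear-order machinery and from the (later-proved) ultrahomogeneity of the stabilizer action; what the paper's approach buys is brevity, given that the cut construction and Pestov's linear framework are already in play throughout Section \ref{Sec:main}.
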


\begin{proof}
	Let $z \in X$ be an arbitrary point and consider the linearly ordered set $(X, <_z)$
	(the ``cut" at $z$, Remark \ref{r:chech}.2).  
	Then $X \setminus \{z\}$ is a ultrahomogeneous linearly ordered set, whence is of the dense type.
	As $z$ is arbitrary our assertion follows. 
\end{proof}


%
%
%

Let $H$ be a closed subgroup of a topological group $G$. Denote by 
$$q \colon G \to G/H, \  g \mapsto gH=[g]$$ 
the natural (open) projection on the coset $G$-space $G/H$ endowed with the quotient topology. 
Recall that the topological space $G/H$ admits a natural \emph{right uniformity}  $\mu_r(G/H)$. A uniform basis of $\mu_r(G/H)$ is a family of all entourages of the 
form 
$$
\tilde{V}:=\{(xH,yH): \ xy^{-1} \in V\}
$$
where $V \in N_e(G)$ is a neighrborhood of the identity $e$ in $G$. 
Then $q$ is uniformly continuous and the Samuel compactification of $\mu_r(G/H)$ induces the maximal $G$-compactification  
$G/H \hookrightarrow \beta_G G/H$  of the $G$-space $G/H$ (which, in this case, is a topological embedding). 

Recall also that every uniform structure can be defined by uniform coverings.
In the case of $\mu_r(G/H)$ the corresponding basis is the following family of  uniform coverings of $G/H$ 
$$
\nu_V:=\{V[x]: \ [x]=xH \in G/H\} 
$$
where $V \in N_e(G)$. 


\sk 
The following result is a circular analog of Pestov's result.  
As we already mentioned the
``intrinsically linearly ordered" groups 
(that is, the groups with linearly ordered 
$M(G)$) 
are exactly 
the extremely amenable groups.
About the structure and properties of $M(G)$ see Theorem \ref{thm;ep}.

\begin{thm} \label{main-ultra} 
	Let a group $G$ act ultrahomogeneously 
	on an infinite circularly ordered (discrete) set $X$. 
	Then $G$, with the pointwise topology, 
	is intrinsically c-ordered (i.e., $M(G)$ is a c-ordered $G$-system). If $X$ is countable then $M(G)$ is metrizable. 
\end{thm}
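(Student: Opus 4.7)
The plan has three stages: reducing to the stabilizer via Pestov, constructing a c-ordered $G$-compactification of $X$, and identifying it with $M(G)$.

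\textbf{Stage 1 (Stabilizer is extremely amenable).} Fix $z_0 \in X$ and set $H := G_{z_0}$. By Remark \ref{r:chech}(2), the c-order on $X$ restricts to a linear order $<_{z_0}$ on $X' := X \setminus \{z_0\}$, preserved by $H$. I would verify that $H$ acts ultrahomogeneously on $(X', <_{z_0})$ as follows: given a linear-order isomorphism $\phi\colon F_1 \to F_2$ between finite subsets of $X'$, the extension $\phi \cup \{(z_0, z_0)\}$ is a c-order isomorphism of finite subsets of $X$ (since the linear order is recovered by cutting at $z_0$), and by ultrahomogeneity of $G \curvearrowright X$ extends to some $g \in G$; the condition $g(z_0) = z_0$ forces $g \in H$. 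By Lemma \ref{lem;dense}, $(X', <_{z_0})$ is dense without endpoints, so Pestov's theorem gives that $(H, \tau_p)$ is extremely amenable.

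\textbf{Stage 2 (A c-ordered $G$-compactification $Y$ of $X$).} Build a compact c-ordered $G$-space $Y$ containing $X \cong G/H$ as a dense $G$-invariant subset. The natural candidate is the inverse limit, via Lemmas \ref{invLimCirc} and \ref{invLimCircTop}, of the directed system of finite $G$-equivariant c-ordered quotients of $X$ obtained by cutting $X$ along finite $G$-orbits of tuples; one may equivalently think of $Y$ as a circular Dedekind-type completion in which each point of $X$ is potentially split into left and right limits. The continuity and c-order preservation of the limiting $G$-action follow from Lemma \ref{c-isclosed} together with the closedness of the c-order relation under passage to the limit.

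\textbf{Stage 3 (Identify $Y$ with $M(G)$; metrizability).} Minimality of $Y$ follows from the ultrahomogeneity of $G \curvearrowright X$: basic open c-order intervals in $Y$ have endpoints approximable by $X$, and $G$ acts transitively on $X$, so every $G$-orbit in $Y$ is dense. For universality, given any minimal $G$-flow $Z$, extreme amenability of $H$ yields a fixed point $z^* \in Z^H$; the orbit map $g \mapsto g z^*$ is $G$-equivariant and uniformly continuous $G/H \to Z$, hence extends to $\beta_G(G/H) \to Z$ and then descends to $Y \twoheadrightarrow Z$. Thus $Y = M(G)$. When $X$ is countable, the countable dense set $X \subset Y$ provides a countable base of c-order intervals, making the compact space $Y$ second countable and hence metrizable.

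\textbf{Main obstacle.} The crux is Stage 2: constructing $Y$ so that it is simultaneously c-ordered, a $G$-space via a continuous c-order-preserving action, and universal among c-ordered $G$-compactifications of $G/H$. Lemmas \ref{invLimCirc}, \ref{invLimCircTop}, and \ref{c-isclosed} supply the topological machinery, but the precise choice of directed system of finite $G$-quotients has to leverage the ultrahomogeneity hypothesis to ensure the inverse limit is non-collapsing with $X$ dense. A related subtle point in Stage 3 is verifying that the extension $\beta_G(G/H) \to Z$ does descend to $Y$, which amounts to saying that $Y$ captures the maximal c-ordered quotient of $\beta_G(G/H)$ containing $X$.
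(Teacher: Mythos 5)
Your Stage 1 coincides with the paper's first step (stabilizer $H=St(z)$, ultrahomogeneity of $H\curvearrowright (X\setminus\{z\},<_z)$ via the cut, Pestov's theorem), and your Stage 2 names the right object, the inverse limit of finite c-ordered quotients handled by Lemmas \ref{invLimCirc} and \ref{invLimCircTop}. But the point you flag as the ``main obstacle'' is left unresolved, and your proposed resolution (a descent of $\beta_G(G/H)\to Z$ through $Y$) is not how it goes. What the paper proves (Lemma \ref{Claim1}) is that $\beta_G(G/H)$ \emph{equals} the inverse limit: basic neighborhoods of $e$ in $(G,\tau_p)$ are the pointwise stabilizers $V_F=St(F)$ of finite cycles $F=\{t_1,\dots,t_m\}$, and ultrahomogeneity --- used here a second time, beyond Stage 1 --- gives $V_Fx=(t_i,t_{i+1})_o$ for every $x\in(t_i,t_{i+1})_o$ and $V_Ft_i=t_i$. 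Hence the basic uniform cover $\{V_Fx:\ x\in X\}$ of $G/H\cong X$ is the \emph{finite} partition $cov_F$ into the cycle points and the (nonempty, by Lemma \ref{lem;dense}) intervals between them. Consequently the right uniformity on $G/H$ is precompact and is exactly the weak uniformity of the maps $\pi_F:X\to X_F$, so $\beta_G(G/H)$ is the uniform completion, i.e.\ the inverse limit $X_\infty$ of the finite c-ordered sets $X_F$, which is c-ordered by Lemmas \ref{invLimCirc} and \ref{invLimCircTop}. Without this computation your $Y$ is not known to coincide with $\beta_G(G/H)$, there is nothing to ``descend'' along, and the non-collapsing/density worries you raise remain open. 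Also, your indexing set should be the directed poset of finite injective cycles on $X$ (ordered by sub-cycle), not ``finite $G$-orbits of tuples'': the individual quotients $X_F$ are not $G$-invariant --- an element $g$ carries $X_F$ to $X_{gF}$, and only the inverse limit carries a $G$-action.

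A second simplification: once $\beta_G(G/H)=X_\infty$ is known to be a c-ordered compact $G$-system, you do not need minimality or universality of $Y$ at all for this theorem. Pestov's theorem (\cite[Theorem 6.2.9]{Pe-b}) says that when $H$ is extremely amenable, every minimal subsystem of $\beta_G(G/H)$ is isomorphic to $M(G)$; since a closed subspace of a c-ordered compact space is again c-ordered, $M(G)$ is c-ordered and the proof ends there. (Minimality of $X_\infty$ and the identification $M(G)=X_\infty$ are established separately, via extreme proximality, in Theorem \ref{thm;ep}.) Your metrizability argument in the countable case is fine.
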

\begin{proof}

	Choose any point $z \in X$. The corresponding stabilizer $H:=St(z)$ 
	is a clopen subgroup of $G$. 
	
	As in \cite[Proposition 2.3]{GM-c} consider the canonical linear order on $X$ defined by the rule: 
	$$z <_z a, \ \ \ \  a <_z b \Leftrightarrow [z,a,b] \ \ \ \forall a \neq b \neq z \neq a.$$
	By construction (the smallest element) $z$ is $H$-fixed.

	\sk 
	\nt \textbf{Claim.} The induced action $H \curvearrowright X \setminus \{z\}$ on the linearly ordered set $X \setminus \{z\}$ is linear order preserving and  ultrahomogeneous.
	\begin{proof} 
		Indeed, let $x <_z y$ in $X \setminus \{z\}$. Then $[z,x,y]$. For every $h \in H$ we have $[h(z),h(x),h(y)]=[z,h(x),h(y)]$. This means that $h(x) <_z h(y)$. 
		
		Furthermore, the action $H \curvearrowright X \setminus \{z\}$ is
		ultrahomogeneous.
		First of all the action $St(z) \curvearrowright X \setminus \{z\}$ is also effective. 
		For every pair of $k$-element chains 
		$$
		x_1 <_z x_2 <_z \cdots <_z x_k \ \ \ \ \ y_1 <_z y_2 <_z \cdots <_z y_k
		$$
		in $X_{<_z} \setminus \{z\}$ consider the pair of $(k+1)$-chains by adding the least element $z$. That is, consider 
		$$
		z <_z x_1 <_z x_2 <_z \cdots <_z x_k \ \ \ \ \ z<_z y_1 <_z y_2 <_z \cdots <_z y_k.
		$$
		We can treat them as a pair of cycles in the circularly ordered set $X_{<_z}$. The bijection 
		$x_i \mapsto y_i, \  z \mapsto z$ is an isomorphism between finite circularly ordered sets. Therefore there exists an extension $g: X \to X$, $g \in G$. Since $g(z)=z$ we have $g \in H$ and $g$ preserves the order $<_z$ on $X$ and hence also on $X \setminus \{z\}$. 
	\end{proof}

	\sk 
	By Pestov's theorem \cite{Pes98} mentioned above, $H$, 
	equipped with the pointwise topology on the discrete space $X \setminus \{z\}$ 
	is extremely amenable. 
	Note that this topology on $H=St(z)$ is the same as the topology of simple convergence on $X$ with respect to the action 
	of $H$ on the discrete set $X$ (since $z$ is fixed under $H$). 
	So, we may treat $H$ as 
	a topological subgroup of $G$.

	We have the natural isomorphism of $G$-actions 
	$$i \colon G/H \to X, \ gH \mapsto gz.$$ 
	For convenience sometimes we identify below the 
	discrete $G$-sets $G/H$ and $X$. 
	We can treat $i$ also as an isomorphism of uniform spaces, where $G/H$ us endowed with its right uniformity $\mu_r(G/H)$ and $X$ carries the natural uniformity $\mu_X$ such that the coverings of the form $\{V x: x \in X\}$, with $V$ a neighborhood of the identity in $G$, is a base of the uniformity. 

	Let $\beta_G (G/H)$ be the maximal $G$-compactification of the 
	discrete coset $G$-space $G/H:=\{gH:   g \in G\}$.  
	Since $H$ is extremely amenable one may apply 
	another result of 
	Pestov \cite[Theorem 6.2.9]{Pe-b} 
	which asserts that any minimal compact $G$-subsystem of $\beta_G (G/H)$ 
	is isomorphic to the universal system $M(G)$. Therefore, it is enough to show that $\beta_G (G/H)$  is 
	c-ordered (it is easy to show that a closed subspace of a c-ordered  compact space is again c-ordered).  
	Below we will see that $G/H$, as a uniform space with respect to the usual right uniformity, is precompact (Lemma  \ref{Claim1}). 
	So, in this case $\beta_G (G/H)$ is just the completion $\widehat{G/H}$ (and the Samuel compactification) of the precompact uniform $G$-space $G/H$. 
	
	So our aim is to show that the compact space $\widehat{G/H}$ is a c-ordered $G$-system. We show this in Lemma \ref{l:Limit} below. For these purposes we will need some preparations. 
	
	Let $F:=\{t_1,t_2, \cdots, t_m\}$ be an \textit{$m$-cycle} on $X$. That is, a c-order preserving injective map 
	$F: C_m \to X$, where $t_i=F(i)$ and $C_m:=\{1,2, \cdots, m\}$ with the natural circular order.  We have a natural equivalence "modulo-$m$" between $m$-cycles (with the same support). 
	
	For every given cycle $F:=\{t_1,t_2, \cdots, t_m\}$  
	define the corresponding finite disjoint covering $cov_F$ of $X$, by adding to the list: all 
	points $t_i$ and (nonempty by Lemma \ref{lem;dense}) intervals $(t_i,t_{i+1})_o$ between the cycle points. More precisely we consider the following disjoint cover which can be think of an equivalence relation on $X$. 
	$$
	cov_F:=\{t_1, (t_1,t_2)_o, t_2, (t_2,t_3)_o, \cdots, t_m, (t_m,t_1)_o \}. 
	$$ 
	Moreover, $cov_F$ naturally defines also a finite c-ordered set $X_F$ by ``gluing the points" of the interval $(t_i,t_{i+1})_o$ for each $i$. 
	So, the c-ordered set $X_F$ is the factor-set of the equivalence relation $cov_F$ and it contains $2m$ elements. 
	In the extremal case of $m=1$ (that is, for $F=\{t_1\}$) we define $cov_F:=\{t_1, X \setminus\{t_1\}\}$. 

	We have 
	the following canonical c-order preserving onto map  
	\begin{equation} \label{projection} 
	\pi_F \colon X \to X_F, \  \ 
	\pi_F(x) =
	\begin{cases}
	t_i & {\text{for}} \ x=t_i\\
	(t_i,t_{i+1})_o & {\text{for}} \ x \in (t_i,t_{i+1})_o,
	\end{cases}
	\end{equation}

	\begin{lem} \label{Claim1}
		The family 
		$
		\{cov_F\}
		$ 
		where $F$ runs over all finite injective cycles $$F: \{1,2,\cdots,m\} \to X=G/H$$ on $X$ is a bases of the natural uniformity of the coset right uniform space $G/H$.  
		This uniform structure is precompact.
	\end{lem}
	
	\begin{proof} First note the following general fact. If 
		$G$ is a nonarchimedean (NA) topological group and $H$ is its arbitrary subgroup. Then the uniform space $G/H$ is NA. 
		That is, uniformly continuous equivalence relations on $G/H$ 
		form a uniform bases of the right uniformity. 
		Indeed, for every open subgroup $P$ of $G$ we have an equivalence relation on $G$ -- partition by double cosets of the form $PxH$, where $x \in S(P,H)$ and $S(P,H) \subset G$ is a subset of representatives. This partition induces an equivalence relation on $G/H$ which is an element of the right uniformity that majorizes the basic uniform covering $\nu_P=\{P[x]: \ [x] \in G/H\}$.

		Back to our $G$ from Theorem \ref{main-ultra} which acts on the discrete set $X$. By the definition of pointwise topology $\tau_p$, basic neighborhoods of the identity $e \in G$ are 
		subgroups
		$$V_F=St(F) : = \cap_{x \in F} St(x),$$ where $F$ is a finite subset of $X$.
		Accordingly, we have a basic uniform covering $\{V_Fx: \ x \in X\}$ of $X$. We can suppose that $F=\{t_1,t_2, \cdots, t_m\}$ is a cycle of $X$. Since the action is ultrahomogeneous it follows that $V_F x=(t_i,t_{i+1})_o$ for every $x \in (t_i,t_{i+1})_o$ (and $V_F t_i=t_i$). Hence we obtain that 
		$cov_F=\{V_Fx: \ x \in X\}$ is, in fact, a finite covering. 
		It follows immediately from the definitions that $G/H=X$, as a uniform structure, is precompact meaning that its completion $\widehat{G/H}$ is compact.
	\end{proof} 
	
	%
	%
	%
	%

	Let $Cycl(X)$ be the set of all finite injective cycles.  
	Every finite $m$-element subset $A \subset X$ defines a cycle $F_A \colon \{1,\cdots,m\} \to X$  (perhaps after some reordering) which is uniquely defined up to the natural cyclic equivalence and the image of $F_A$ is $A$. 
	
	$Cycl(X)$ 
	is a poset if we define $F_1 \leq F_2$ whenever $F_1 \colon C_{m_1} \to X$ is a \emph{sub-cycle} of $F_2 \colon C_{m_2} \to X$. This means that $m_1 \leq m_2$ and $F_1(C_{m_1}) \subseteq F_2(C_{m_2})$.   
	This partial order is directed. Indeed, for $F_1,F_2$ we can consider $F_3=F_1 \bigsqcup F_2$ whose support is the union of the supports of $F_1$ and $F_2$. 
	
	For every $F \in Cycl(X)$ we have the disjoint finite 
	$\mu_X$-uniform covering $cov_F=\{V_Fx:  x \in X\}$ of $X$. 
	As before we can look at $cov_F$ as a c-ordered (finite) set 
	$X_F$. Also, as in equation \ref{projection}, we have a c-order preserving natural map $\pi_F: X \to X_F$ which are uniformly continuous into the finite (discrete) uniform space $X_F$. 
	Moreover, 
	if $F_1 \leq F_2$ then $cov_{F_2} \subseteq cov_{F_1}$. This implies that the equivalence relation $cov_{F_2}$ is sharper than $cov_{F_1}$.  
	We have a c-order preserving (continuous) onto bonding map $f_{F_1,F_2} \colon X_{F_2} \to X_{F_1}$ between finite c-ordered sets. Moreover, $f_{F_1,F_2} \circ \pi_{F_2}=\pi_{F_1}$. 

	In this way we get an inverse system 
	$$
	\{f_{F_1,F_2} \colon X_{F_2} \to X_{F_1}, \ \ F_1 \leq F_2, \ \ I \}
	$$
	where $(I,\leq)=Cycl(X)$ be the directed poset defined above. 
	It is easy to see that $f_{F,F}=id$ and $f_{F_1,F_3}=f_{F_1,F_2} \circ f_{F_2,F_3}$  for every $F_1 \leq F_2 \leq F_3$.

	Denote by $$X_{\infty}:=\underleftarrow{\lim} (X_F, \ I)  \subset \prod_{F \in I} X_F$$ the corresponding inverse limit. 
	Its typical element is 
	$\{(x_F) : F \in Cycl(X)\} \in X_{\infty}$, where $x_F \in X_F$.  The set 
	$X_{\infty}$ carries a circular order $R$ as in Lemma \ref{invLimCirc}. 
	
	For every given $g \in G$ (it is c-order preserving on $X$) we have the induced isomorphism $X_F \to X_{gF}$ of c-ordered sets, where $t_i \mapsto gt_i$ and $(t_i,t_{i+1})_o \mapsto (gt_i,gt_{i+1})_o$ for every $t_i \in cov_F$. 
	For every 
	$F_1 \leq F_2$ we have $f_{F_1,F_2} (x_{F_2})=x_{F_1}$. This implies that $f_{gF_1,gF_2} (x_{gF_2})=x_{gF_1}$. So, $(gx_F)=(x_{gF}) \in X_{\infty}$. 
	
	Therefore $g \colon X \to X$ can be extended canonically to a 
	map 
	$$
	g_{\infty} \colon X_{\infty} \to X_{\infty}, \ \ g_{\infty} (x_F) := (x_{gF}).
	$$ 
	\nt This map is a c-order automorphism. Indeed, if $[x,y,z]$ in $X_{\infty}$ then there exists $F \in I$ such that $[x_F,y_F,z_F]$ in $X_F$. Since $g \colon X \to X$ is a c-order automorphism we obtain that $[gx_{F},gy_{F},gz_{F}]$ in $X_{gF}$. 
	
	One may easily show that we have a continuous action $G \times X_{\infty} \to X_{\infty}$, where $X_{\infty}$ carries the compact topology of the inverse limit as a closed subset of the topological product $\prod_{F \in I} X_F$ of finite discrete spaces $X_F$.

	\begin{lem} \label{l:Limit} 
		$\beta_G (G/H)$=	$\widehat{G/H} \simeq X_{\infty}$ as compact $G$-spaces. 
		Furthermore, if $X$ is countable then $\widehat{G/H}$ is a metrizable compact. 
	\end{lem}
	\begin{proof} Recall that the map 
		$i \colon G/H \to X, \ gH \mapsto gz$ identifies 
		the discrete $G$-spaces $G/H$ and $X$. 
		Moreover, as our discussion above shows, $i$ is a uniform isomorphism of 
		the precompact uniform spaces $(G/H,\mu_r)$ and $(X,\mu_X)$, where $\mu_X$ can be treated as the weak uniformity with respect to the family of maps $\{\pi_F \colon X \to X_F: \ F \in Cycl(X)\}$ (into the finite uniform spaces $X_F$). 
		
		Observe that $f_{F_1,F_2} \circ \pi_{F_2}=\pi_{F_1}$ for every $F_1 \leq F_2$.  By the universal property of the inverse limit we have the canonical uniformly continuous map $\pi_{\infty} \colon X \to X_{\infty}$. It is easy to see that it is an embedding of uniform spaces and 
		that $\pi_{\infty}(X)$ is dense in $X_{\infty}$. Since $X=G/H$ is a precompact uniform space we obtain that its uniform completion is a compact space and can be identified with $X_{\infty}$. The uniform embedding $G/H \to X_{\infty}$ is a $G$-map. It follows that the uniform isomorphism $\widehat{G/H} \to X_{\infty}$ is also a $G$-map.  
	\end{proof}
	
	\sk 
	On the other hand this inverse limit $X_{\infty}$ is c-ordered as it follows from Lemmas \ref{invLimCirc} and \ref{invLimCircTop}.  Furthermore, as we have
	already mentioned the action of $G$ on $X_{\infty}$ is c-order preserving. 
	Therefore $X_\infty = \widehat{G/H}$ is a compact c-ordered $G$-system. 
	Since $M(G)$ is its subsystem this completes the proof of 
	the first part of
	Theorem \ref{main-ultra}.  
	Note that  $G$ is always nonarchimedean, being 
	a topological subgroup of the symmetric group $S_X$. 
	When $X$ is countable $S_X \cong S_\N$ is a Polish group and 
	the group $(G,\tau_p)$ is separable and metrizable. 
	Moreover, analyzing the proof, we see that in that case $\widehat{G/H}$ and  
	$M(G)$ are metrizable.  
\end{proof}
 
In fact, $M(G)$ coincides with $\beta_G (G/H)$, as it follows from Theorem \ref{thm;ep} below. 

\sk \sk 

Let $\T:=\R / \Z$ be the circle and $q \colon \R \to \R / \Z$ is the canonical projection. 
Denote by $\Q_{\circ}$ the set $q(\Q)$ of all rationals  into the circle $\T$ (endowed with the natural c-order). Let $D_{\circ}=D/\Z$ be its subset $q(D)$, where $D$ is a set of all dyadic rationals.  
Denote by $H_+(\T)$ the group of all circular order preserving homeomorphisms of the circle $\T$.

\begin{cor} \label{c:examples} 
	The following topological groups are intrinsically circularly ordered. Hence, also (convexly) intrinsically tame. 
	\ben 
	\item Polish group $G:=\Aut(\Q_{\circ})$. 
	Furthermore, $M(G)$ is metrizable.  
	
	\item \emph{Thompson's circular group} $T$ with the pointwise convergence topology (acting on $D_{\circ}$); 
	
	\item Topological group (not Polish) $H_+(\T_{discr})$ in the pointwise topology $\tau_p$  with respect to the action $H_+(\T) \curvearrowright , (\T,\tau_{discr})$, where $\T$ carries the discrete topology; 
	\item Polish group $H_+(\T)$ in the usual compact open topology. 
	\een
	
\end{cor}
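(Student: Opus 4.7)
The plan is to reduce each of (1)--(3) to Theorem~\ref{main-ultra} by exhibiting a circularly ultrahomogeneous action of the relevant group on a suitable c-ordered set, and to deduce (4) directly from Pestov's description of $M(H_+(\T))$ recalled in the introduction. In every case, the ``hence'' clause about (convex) intrinsic tameness is then automatic from the chain of implications recorded in Remark~\ref{r:diagrams} (using that every c-ordered $G$-system is tame by \cite{GM-c}).

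For (1), the group $\Aut(\Q_{\circ})$ is by definition the full automorphism group of the c-ordered set $\Q_{\circ}$, so I only need to verify ultrahomogeneity. Given a c-order isomorphism $\phi\colon A \to B$ between two finite subsets of $\Q_{\circ}$, I would look at the decompositions of $\Q_{\circ}\setminus A$ and $\Q_{\circ}\setminus B$ into cyclic gaps, matched by $\phi$. Each such gap, intersected with $\Q_{\circ}$, is a countable dense linear order without endpoints, so Cantor's back-and-forth theorem supplies an order isomorphism between corresponding gaps; gluing these together with $\phi$ yields an extension in $\Aut(\Q_{\circ})$. Theorem~\ref{main-ultra} then gives both intrinsic c-orderedness and (by countability of $\Q_{\circ}$) metrizability of $M(G)$.

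For (2), ultrahomogeneity of Thompson's group $T$ on $D_{\circ}$ is the content of a standard (but nontrivial) fact about piecewise-linear circle maps with dyadic breakpoints and slopes in $2^{\Z}$: any c-order isomorphism between two finite dyadic subsets of $\T$ can be realized by first subdividing matched arcs into corresponding chains of dyadic sub-arcs and then interpolating piecewise-linearly. Granting this, Theorem~\ref{main-ultra} applies and $M(T)$ is metrizable since $D_{\circ}$ is countable. This structural input on $T$ is the main obstacle: it is specific to Thompson's group and does not follow from any soft general principle, which is why the remarks of G.~Golan acknowledged in the introduction are relevant.

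For (3), I would observe that $H_+(\T)$ acts effectively on the infinite c-ordered set $\T$ and that the action is ultrahomogeneous: any c-order isomorphism between two finite subsets of $\T$ extends to an orientation-preserving homeomorphism of $\T$ by continuous piecewise-linear interpolation along each complementary arc. Theorem~\ref{main-ultra} then gives that $(H_+(\T),\tau_p)$ is intrinsically c-ordered (without the metrizability conclusion, since $\T$ is uncountable). For (4), I would invoke Pestov's theorem: $M(H_+(\T)) = \T$ with the tautological action, which is itself a circularly ordered $G$-system, so $H_+(\T)$ in the compact-open topology is intrinsically c-ordered. Note that Lemma~\ref{denseHom}(2) alone cannot deduce (4) from (3), since the identity map between the two topologies on $H_+(\T)$ is continuous in the wrong direction; this is why Pestov's theorem is needed as an independent input.
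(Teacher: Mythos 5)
Your treatment of (1)--(3) is correct and essentially coincides with the paper's: the paper simply asserts the three ultrahomogeneity statements and invokes Theorem \ref{main-ultra}, while you supply the standard back-and-forth, dyadic-subdivision and piecewise-linear verifications behind those assertions; the ``hence'' clause and the metrizability of $M(\Aut(\Q_\circ))$ follow exactly as you say.

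For (4), your argument via Pestov's identification $M(H_+(\T))=\T$ is valid (it is how the paper justifies Example \ref{r:oldEx}(2)), but your reason for rejecting a deduction of (4) from (3) is mistaken, and in fact the paper's proof of (4) is precisely that deduction. The pointwise topology on $H_+(\T)$ relative to the \emph{discrete} circle is \emph{finer} than the compact-open topology, not coarser: given $g$ and $\eps>0$, choose a finite cyclic set $t_1,\dots,t_m$ so that each image arc $[gt_i,gt_{i+1}]$ has diameter less than $\eps$; if $h\in H_+(\T)$ agrees with $g$ on the $t_i$, then, both maps being orientation preserving, each carries $[t_i,t_{i+1}]$ into $[gt_i,gt_{i+1}]$, whence $\sup_x d(hx,gx)<\eps$. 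Thus the identity map $H_+(\T_{discr})\to H_+(\T)$ is a continuous surjective (hence dense) homomorphism pointing in exactly the direction required by Lemma \ref{denseHom}(2), and (4) follows from (3). (You were perhaps thinking of the pointwise topology relative to the usual topology of $\T$, which is indeed coarser than the compact-open topology.) Your proof of (4) stands as an alternative, but the parenthetical claim about the direction of continuity should be deleted.
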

\begin{proof}
	
	(1) $\Q_{\circ}$ is ultrahomogeneous as a c-ordered set. 
	
	(2) The action of the Thompson's group $T$ on $D_{\circ}$ is ultrahomogeneous. 
	
	
	(3) The action $H_+(\T) \curvearrowright \T$ is ultrahomogeneous.
	
	(4) 
	$(H_+(\T, \tau_{discr}), \tau_p)$ is intrinsically c-ordered by (3). Now observe that we have a well defined continuous onto injective homomorphism 
	$H_+(\T, \tau_{discr}) \to H_+(\T)$ 
	and apply Lemma \ref{denseHom}.2. 
\end{proof}

\begin{remark}
	It is a standard fact that Thompson group $F$ acts ultrahomogeneously on 
	the linearly ordered set 
	$D$,  \cite[Lemma 4.2]{CFP}. 
	So, by Pestov's theorem $F$, as a topological group in the pointwise topology, is extremely amenable. 
	Recall that it is an open problem whether the discrete group $F$ is amenable. 
	
	It seems also to be well known (or, to follow easily from \cite[Lemma 4.2]{CFP}, as was observed by G. Golan) 
	that Thompson's circular group $T$ acts ultrahomogeneously on $D / \Z$. See also \cite[Remark 6]{GGJ}.  
\end{remark}


\sk 
We need to discuss more properties of the $G$-system $X_\infty = \beta_G (G/H)$, the inverse limit system constructed in Theorem \ref{main-ultra}. 
	By our construction $X$ is a discrete space densely embedded into the compact 
	space $X_\infty$. 
	First we observe that  $u \in X_\infty$ is an isolated point of $X_\infty$ if and only if $u \in X$.
	Indeed, it is well known that every dense locally compact subspace of a compact space is open. Therefore, $X$ is an open subset of $X_\infty$. It follows that every $x \in X$ is an isolated point of $X_\infty$. 
	On the other hand, no other point $y$ of $X_\infty$ is not isolated. Indeed, if yes, then the open subset $\{y\}$ intersects the dense subset $X$.

	\sk
	Let $Z = X_\infty \setminus X$. 
	Then $Z$ is a closed (hence, compact) $G$-invariant subset of $X_\infty$.   
	Recall that $$X_{\infty}:=\underleftarrow{\lim} (X_F, \ I)  \subset \prod_{F \in I} X_F$$ is an inverse limit. 
	Its elements $u \in X_F$
	are ``threads"
	$u=\{(u_F) : F \in Cycl(X)\}$, where $u_F \in X_F$, 
	$F:=\{t_1,t_2, \cdots, t_m\} \subset (X,\circ)$ is a cycle, 
	$X_F$ is a 
	finite disjoint (cyclically ordered) cover of $X$
	$$
	X_F:=\{t_1, (t_1,t_2)_o, t_2, (t_2,t_3)_o, \cdots, t_m, (t_m,t_1)_o \}  
	$$ 
	and $q_F \colon X_{\infty} \to X_F$ is the natural c-order preserving projection. 
	
	A thread $u \in X_{\infty}$ represents an element $x \in X$ iff there exists $F  \in Cycl(X)$ such that $u_F=t_i=x$ 
	for some $t_i \in F$ (it is the only case when $u$ is isolated).  
	
	A typical (basic) neighborhood of $u \in X_{\infty}$ is 
	$$
	O_F(u):=\{v \in X_{\infty}: v_F = u_F\}.  
	$$
	Here $u_F$ is one of the members in $X_F$. If $u \in Z$ (i.e., $u \notin X$) then necessarily $u_F=(t_i,t_{i+1})_\circ$ for some $i$ and then $u \in (t_i,t_{i+1})_{\infty}$ (that is, $[t_i,u,t_{i+1}]$ in $X_{\infty}$). 
	In fact, taking into account that $q_F \colon X_{\infty} \to X_F$ is c-order preserving, we have 
	\begin{equation} \label{eq:nbd} 
		O_F(u)=(t_i,t_{i+1})_{\infty}. 
	\end{equation}
	

\sk 
For every $x \in X$ define $x^- \in X_{\infty}$ as a thread $u=(u_F) \in X_{\infty}$ such that for every cycle $F$ which contains $t_i=x$, we have $u_F=(t_{i-1},t_i)$. 
This defines one and only one element from $X_{\infty}$ having this property. The map $X \to X^-, x \mapsto x^-$ is a bijection.  
Dually (replacing $t_i$ by $t_{i-1}$) can be defined $x^+$. Set $X^-=\{x^-:x \in X\}$, 
$X^+=\{x^+:x \in X\}$ and $X^*=X^- \cup X \cup X^+$. It is easy to see that all three subsets are pairwise disjoint. 

\sk 
\begin{defin}	\cite{Gl-book1} 
	A compact $G$-system $Z$ is called: 
	\begin{enumerate}  
		\item
		{\em Strongly proximal} 
		if for every  $\mu \in \mathcal{P}(Z)$, the compact space of probability measures on $Z$ equipped 
		with its weak-star topology, there are a net $g_\al$  in $G$ and a point $z \in Z$ such that $\lim g_\al \mu = \del_z$,
		the point mass at $z$.
		\item
		{\em Extremely proximal} 
		if $Z$ is infinite and for every nonempty closed subset $A \subset Z$ with $A \not= Z$, there is a net
		$g_\al$ in $G$ and a point $z \in Z$, such that, in the space $2^Z$ of closed subsets of $Z$ equipped with the
		Vietoris topology, $\lim g_\al A = \{z\}$.
	\end{enumerate}
\end{defin}

It is shown in \cite[Proposition VII, 3.5]{Gl-book1} that if a group $G$ admits a nontrivial minimal extremely proximal action
on a compact space $Z$ then $G$ contains a free subgroup on two generators.

\sk 
\begin{thm} \label{thm;ep}
	Let a group $G$ act ultrahomogeneously on an infinite circularly ordered set $X$.  
	Then the $G$-system  $Z=X_\infty \setminus X$ is: 
	\begin{enumerate}
		\item
		Minimal, 
		 hence $M(G, \tau_p)=X_\infty \setminus X$. 
		\item
		Extremely proximal. 
		\item
		The universal strongly proximal minimal system $M_{sp}(G)$ of $G$. 
		\sk 
		Moreover, 
		\item 
		$G$ contains a free subgroup on two generators.
		\item 
		The universal irreducible affine $G$-system $I\!A(G)$ is $P(X_\infty \setminus X)$ (the affine $G$-system of all probability measures on $X_\infty \setminus X$).  
		
	\end{enumerate}
\end{thm}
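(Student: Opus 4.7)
The plan is to establish $(2)$ first by a direct contraction argument using ultrahomogeneity, derive $(1)$ from $(2)$ together with the non-existence of $G$-fixed points in $X_\infty$, and then obtain $(3)$--$(5)$ by standard general arguments.

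For $(2)$: let $A \subsetneq X_\infty$ be a nonempty closed subset. Since the open arcs $(a,b)_R$ form a base for the topology on $X_\infty$ (Proposition \ref{Hausdorff} and Lemma \ref{invLimCircTop}) and $\pi_\infty(X)$ is dense in $X_\infty$, I can choose $u_1, u_2 \in X$ such that the open arc $(\pi_\infty(u_1), \pi_\infty(u_2))_R$ is disjoint from $A$; equivalently $A \subseteq [\pi_\infty(u_2), \pi_\infty(u_1)]_R$. Fix any target $z \in X$. For every open arc $V = (c, d)_R$ in $X_\infty$ containing $\pi_\infty(z)$ with endpoints $c, d \in \pi_\infty(X)$, density (Lemma \ref{lem;dense}) provides $a, b \in X$ with $[c, a, z, b, d]$ a cycle in $X_\circ$. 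The bijection $\{u_1, u_2\} \to \{b, a\}$, $u_1 \mapsto b$, $u_2 \mapsto a$, is trivially a circular-order isomorphism of two-element sets, so ultrahomogeneity furnishes $g \in G$ realizing it; since $g$ preserves circular order, $g[\pi_\infty(u_2), \pi_\infty(u_1)]_R = [\pi_\infty(a), \pi_\infty(b)]_R \subseteq V$, whence $gA \subseteq V$. Directing these $g_V$'s by reverse inclusion of $V$ produces a net with $g_V A \to \{\pi_\infty(z)\}$ in the Vietoris topology on $2^{X_\infty}$.

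For $(1)$: using ultrahomogeneity and the infinitude of $X$, I verify that $X_\infty$ has no $G$-fixed point. Indeed, if $\xi \in X_\infty$ were fixed, then, writing $\xi = (\xi_F)_{F \in Cycl(X)}$, the $G$-invariance of $\xi$ combined with transitivity of $G$ on cycles of a fixed length yields $\xi_{gF_0} = g \cdot \xi_{F_0}$ for every $g \in G$; choosing $g$ so that $gF_0$ and $F_0$ have disjoint arc cells then contradicts the inverse-limit compatibility relations on the common refinement $F_0 \cup gF_0$, while if $\xi_{F_0}$ is a singleton cell $\{t\}$ we get $gt = t$ for all $g$, impossible. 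Given the absence of fixed points, if $Y \subseteq X_\infty$ is a nonempty proper closed $G$-invariant subset, then $(2)$ applied to $A = Y$ produces $g_\alpha Y \to \{z'\}$ in Vietoris; but $g_\alpha Y = Y$ for all $\alpha$, forcing $Y = \{z'\}$ to be a $G$-fixed singleton, contradiction. Hence $X_\infty$ is minimal, and Theorem \ref{main-ultra} then identifies it with $M(G, \tau_p) = \widehat{G/H}$.

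For $(3)$: extreme proximality implies strong proximality by the standard barycenter argument -- given $\mu \in \mathcal{P}(X_\infty)$, shrink the support with a single element of $G$ if necessary so that $\mathrm{supp}\,\mu$ is proper, then apply $(2)$ to $\mathrm{supp}\,\mu$ to obtain $g_\alpha \mu \to \delta_{z'}$ in the weak-$*$ topology. Thus $X_\infty$ is a strongly proximal minimal $G$-system, and by the universal property of $M_{sp}(G)$ one concludes $M_{sp}(G) = X_\infty$. Part $(4)$ is immediate from \cite[Proposition VII.3.5]{Gl-book1} applied to the nontrivial (since $|X| = \infty$) minimal extremely proximal $G$-action on $X_\infty$. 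Part $(5)$ follows from the classical identification $I\!A(G) = \mathcal{P}(M_{sp}(G))$ (\cite{Gl-book1}) combined with $(3)$.

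The principal obstacle is the orientation bookkeeping in $(2)$: the cycle $[c, a, z, b, d]$ must be picked consistently with the orientation of the arc disjoint from $A$, so that the image $[\pi_\infty(a), \pi_\infty(b)]_R$ is the ``short'' arc through $\pi_\infty(z)$ lying inside $V$ rather than wrapping the ``long way'' around $X_\infty$. Once this is handled, the remainder chains together cleanly: $(1)$ is a formal consequence of $(2)$ plus the no-fixed-points verification, and $(3)$--$(5)$ are standard consequences for strongly proximal minimal systems, with $(4)$ a direct citation.
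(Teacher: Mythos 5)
Your argument for (2) is essentially the paper's (shrink the complementary arc containing $A$ into ever smaller arcs around a target point using ultrahomogeneity on two-- or three--point configurations), and (4), (5) are the same citations. The genuine problem is in (3). You propose to ``shrink the support with a single element of $G$ if necessary so that $\mathrm{supp}\,\mu$ is proper.'' This is impossible: each $g\in G$ acts on $X_\infty$ as a homeomorphism, so $\mathrm{supp}(g\mu)=g(\mathrm{supp}\,\mu)$, and if $\mu$ has full support (such measures exist, e.g.\ the push-forward of Lebesgue measure to $\Sp(\T;\Q_\circ)$), then every translate has full support and extreme proximality, which only applies to \emph{proper} closed subsets, can never be invoked on $\mathrm{supp}\,\mu$. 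The paper avoids this by splitting $\mu=\mu_a+\mu_c$ into atomic and continuous parts, collapsing $\mu_a$ by proximality, and then using inner regularity for $\mu_c$: for each $\varepsilon>0$ choose a compact $A$ with $1-\varepsilon<\mu(A)<1$ (hence $A$ proper), apply (2) to $A$ to get a limit measure giving mass $\ge 1-\varepsilon$ to every neighborhood of a point, and let $\varepsilon\to 0$. Some such regularity step is indispensable; as written your proof of (3) fails for fully supported $\mu$. Relatedly, the conclusion $M_{sp}(G)=X_\infty$ does not follow from the universal property of $M_{sp}(G)$ alone; it uses that $X_\infty=M(G)$ already factors onto every minimal system and is itself minimal and strongly proximal.

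On (1) you take a genuinely different (and in principle valid) route: fixed-point-freeness plus extreme proximality forces minimality, since an invariant proper closed $Y$ would satisfy $Y=\lim g_\alpha Y=\{z'\}$ in the Vietoris topology. But the fixed-point-freeness verification you sketch is the weak link: ``$\xi_{gF_0}=g\cdot\xi_{F_0}$'' is just the definition of a fixed point, not a consequence of transitivity on cycles, and for a one-point cycle $F_0=\{t_1\}$ the complementary arc cells of $cov_{F_0}$ and $cov_{gF_0}$ are never disjoint, so your ``disjoint arc cells'' contradiction does not apply directly; one must first pass to a $2$-cycle refinement and rule out the interval-cell case there. This is fixable, but the paper's route is shorter and avoids it entirely: $G$ is transitive on $X$, which is dense in $X_\infty$, so each $x\in X$ has dense orbit, and the proof of (2) applied to $A=\{y\}$ shows every orbit closure meets $X$; minimality follows with no fixed-point analysis.
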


\begin{proof} 
		(1)
	Let $u, v \in Z$. Consider a basic neighborhood $O_F(u)$ in $X_{\infty}$. We will show that there exists $g \in G$ such that $gv \in O_F(u)$ (then necessarily $gv \in O_F(u) \cap Z$). By our construction, $u \in (t_i,t_{i+1})_{\infty}$. Take any pair $a, b \in X$ such that $v \in (a,b)_{\infty}$. By the ultrahomogeneity there exists $g \in G$ such that $ga=t_i, gb=t_{i+1}$. Then (since the $g$-translation  preserves the circular order) 
	$[a,v,b]$ implies $[t_i,gv,t_{i+1}]$. This means that $gv \in (t_i,t_{i+1})_{\infty}=O_F(u)$, as desired.  

	(2)  
	Let $A$ be a closed subset of $Z$ such that $A \neq Z$. Then there exists a pair
	$a, b \in X$ such that $A \subset [a,b]_{\infty}$. 
	Indeed, choose $u \in Z \setminus A$. Then $Z \setminus A$ is an open neighborhood of $u$ in $Z$. There exists a basic neighbourhood $O_F(u)$ in $X_{\infty}$ 
	(where $O_F(u):=\{v \in X_{\infty}: v_F = u_F\}$), 
	such that $O_F(u) \cap Z \subset Z \setminus A$. Since $u \notin X$, we have $u_F=(b,a)_{\infty}$ 
	for some $a,b \in X$. 
	So, $(b,a)_{\infty} \subset Z \setminus A$. Since $(b,a)_{\infty} \cup [a,b]_{\infty} = X_{\infty}$, we obtain  $A \subset [a,b]_{\infty}$. 
	
	\sk  
	\nt	\textbf{Claim}: 
	\textit{Let $F \in Cycl(X,\circ)$ and 
		$O_F(u)=(x_1,x_2)_{\infty}$ be a basic neighborhood of 
		$u=x_2^- \in X_{\infty}$ (where $x_1,x_2 \in X$). 
		Then there exists $a_F \in X$ such that 
		$$
		a_F \in (x_1,x_2^-)_{\infty}, \ \ x_2^- \in (a_F,x_2)_{\infty}.
		$$ }
	\begin{proof}  
		By definitions of $x_1^+, x_2^-$ and $[x_1,x_2^-,x_2]$,  there exists a sufficiently large 
		$F=\{t_1,t_2, \cdots, t_m\}  \in Cycl(X)$ 
		such that for some subcycle  $[i,k,j]$ we have  
		$$t_i=x_1,  \ \ u_F=(t_{k}, t_j)_{\circ}, \ \ t_j=x_2.$$ 
		Now, define $a_F:=t_{k}$. 
	\end{proof}
	Choose any $x_0 \in X$. Then $x_0^- \in Z$. 
	By Claim there exists a net 
	$$\{a_F: F \in Cycl(X)\} \ \ \text{where} \ a_F \in X$$ 
	such that $\cap_F (a_F, x_0)_{\infty} = \{x_0^-\}$ and $\lim_F a_F=x_0^-$. 
	By the ultrahomogeneity, 
	there is a net  $\{g_F \in G\}$ such that $g_F(a)=a_F, g_F(b)=x_0$.  Then it follows that $\lim_F g_F (A) = \{x_0^-\}$ in the Vietoris topology.

	(3) 
	Let $\mu$ be a probability measure on $X_\infty \setminus X$. 
	Let $\mu = \mu_a + \mu_c$
	be the decomposition of $\mu$ as a sum of an atomic and continuous measures.
	By proximality (which certainly follows from extreme proximality)
	there is a net in $G$ which shrinks $\mu_a$ to a single point mass (with the appropriate
	weight).
	So we now assume that $\mu$ is continuous. 
	By regularity, given an $\ep >0$
	there is a compact set $A \subset X_\infty$ with $1 -\ep < \mu(A) < 1$.
	Applying extreme proximality to $A$ we can find a net $g_\al \in G$
	with $\lim_\al A = \{z\}$. It is not hard to see that this implies
	that $\nu =\lim g_\al \mu$ (which by compactness we can assume exists)
	has the property that $\nu(V) \ge 1- \ep$  for any open neighborhood $V$ of $z$. 
	As $\ep$ is arbitrary this concludes the proof of strong proximality.
	
	Since the universal system $M(G)=X_\infty \setminus X$ is strongly proximal, then it is the 
	\emph{universal strongly proximal minimal system} of $G$. 
	
	(4) By (2) we may apply \cite[Prop. VII, 3.5]{Gl-book1}.
	
	(5)  Apply results of \cite{Gl-book1} making use of (3). 
\end{proof}

\sk 
\begin{cor}\label{cor:1-1}
	In the context of Theorem \ref{thm;ep} suppose that $X$ is countable.
	Then any nontrivial factor map $\pi \colon M(G) \to Y$ is an almost one to one extension;
	i.e. for a dense $G_\del$ subset $Z_0 \subset M(G)$ we have
	$\pi^{-1}(\pi(z)) = \{z\}$ for every $z \in Z_0$.
\end{cor}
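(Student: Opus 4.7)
The plan is to show that $Z_0 := \{z \in M(G) : \pi^{-1}(\pi(z)) = \{z\}\}$ is a dense $G_\delta$ subset of $M(G) = X_\infty$. Since $X$ is countable, both $X_\infty$ and its factor $Y$ are compact metrizable (the first by Theorem \ref{main-ultra}, the second as a continuous image of it). Fix a compatible metric $d$ on $X_\infty$ and consider the diameter function $D : Y \to [0,\infty)$, $D(y) := \diam \pi^{-1}(y)$. A direct compactness argument shows the fiber map $y \mapsto \pi^{-1}(y)$ is upper semicontinuous into $2^{X_\infty}$ with the Vietoris topology, so $D$ is upper semicontinuous, and each $Y_n := \{y \in Y : D(y) < 1/n\}$ is open in $Y$. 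Thus $\bigcap_n Y_n = \{y : \pi^{-1}(y) \text{ is a singleton}\}$ is a $G_\delta$ in $Y$, and its preimage $Z_0 = \bigcap_n \pi^{-1}(Y_n)$ is $G_\delta$ in $X_\infty$.

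The heart of the argument is to prove each $Y_n$ is dense in $Y$. Given a nonempty open $V \subseteq Y$, I would produce $y' \in Y_n \cap V$ as follows. Pick $x_0 \in \pi^{-1}(V)$ and a radius $0 < r < 1/(2n)$ so that the open ball $B(x_0, r)$ is a proper subset of $X_\infty$ contained in $\pi^{-1}(V)$ (possible since $X_\infty$ is infinite). Apply extreme proximality of $X_\infty$ (Theorem \ref{thm;ep}(2)) to the proper closed set $A := X_\infty \setminus B(x_0, r)$ to obtain a net $g_\alpha$ in $G$ and a point $z \in X_\infty$ with $g_\alpha A \to \{z\}$ in the Vietoris topology. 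Nontriviality of $Y$ supplies some $y^{\#} \in Y \setminus \{\pi(z)\}$. By continuity of $\pi$, the sets $\pi(g_\alpha A)$ eventually lie in any prescribed neighborhood of $\pi(z)$, so for large $\alpha$ we have $y^{\#} \notin \pi(g_\alpha A)$; equivalently $\pi^{-1}(y^{\#}) \cap g_\alpha A = \emptyset$, i.e., $\pi^{-1}(y^{\#}) \subset g_\alpha B(x_0, r)$. Setting $y' := g_\alpha^{-1} y^{\#}$ and using the equivariance $g_\alpha^{-1} \pi^{-1}(y^{\#}) = \pi^{-1}(y')$ gives $\pi^{-1}(y') \subset B(x_0, r) \subset \pi^{-1}(V)$, so $y' \in V$ and $D(y') \leq 2r < 1/n$, placing $y'$ in $Y_n \cap V$ as required.

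With each $Y_n$ dense and open in $Y$, the Baire category theorem gives $\bigcap_n Y_n$ dense $G_\delta$ in $Y$; in particular it is nonempty, and any $y$ in it has unique preimage $\{z\} \subset Z_0$, so $Z_0$ is nonempty. Equivariance of $\pi$ makes $Z_0$ a $G$-invariant subset of $X_\infty$, and minimality of $X_\infty$ (Theorem \ref{thm;ep}(1)) then forces the orbit of $z$ to be dense in $X_\infty$, whence $Z_0$ itself is dense. The main obstacle I foresee is precisely the density step: extreme proximality lets us shrink a given proper closed set toward an \emph{uncontrolled} target point, whereas we must place a small fiber inside a \emph{prescribed} open set $V$. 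The trick resolving this tension is to let $V$ dictate the region $B(x_0, r)$ where fibers should land, apply extreme proximality to its complement, and use $G$-equivariance of $\pi$ to transport a fiber over a point $y^{\#}$ chosen away from $\pi(z)$ back into $V$ by $g_\alpha^{-1}$.
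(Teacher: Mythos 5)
Your argument is correct, and it reaches the conclusion by a genuinely different mechanism than the paper, although both proofs rest on the same two ingredients: upper semicontinuity of the fiber map $y \mapsto \pi^{-1}(y)$ and extreme proximality of $X_\infty$. The paper takes $Z_0 = \pi^{-1}(Y_0)$, where $Y_0$ is the dense $G_\delta$ set of \emph{continuity points} of the upper semicontinuous set-valued map $\pi^{-1} : Y \to 2^{M(G)}$; it then fixes a continuity point $z_0$ and an arbitrary $z_1$, and invokes minimality together with extreme proximality to produce $g_i$ with $g_i z_1 \to z_0$ \emph{and} $g_i R_\pi[z_1] \to \{z_0\}$ simultaneously, so that continuity at $z_0$ forces $R_\pi[z_0]=\{z_0\}$. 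That step uses the standard strengthening of extreme proximality available for minimal systems, namely that a proper closed set can be collapsed onto a \emph{prescribed} target point rather than onto some uncontrolled point. You correctly identified this as the crux and sidestepped it with the pull-back trick: shrink the complement of a prescribed small ball $B(x_0,r)$, pick $y^{\#}$ away from the limit point in $Y$, and transport its fiber back into the ball by $g_\alpha^{-1}$. What your route buys is a slightly sharper statement — the set of $y\in Y$ with singleton fiber is itself a dense $G_\delta$ in $Y$, exhibited as $\bigcap_n Y_n$ with each $Y_n$ open and dense — and it avoids quoting the continuity-point theorem for set-valued maps, at the cost of fixing a compatible metric. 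You are also more careful than the paper at the last step: the preimage of a dense set under $\pi$ is not dense for free, and your appeal to $G$-invariance of $Z_0$ plus minimality of $X_\infty$ is exactly the right way to close that gap (in the paper it is handled implicitly, since the continuity-point set is $G$-invariant).
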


\begin{proof}
	In that case $M(G)$ is metric and if $\pi : M(G) \to Y$ 
	is a nontrivial factor map and $R_\pi = \{(z, z'): \  \pi(z) = \pi(z')\}$,
	then the set-valued map $\pi^{-1} : Y \to 2^{M(G)}$,
	being upper-semi-continuous, has a dense $G_\del$ subset
	$Y_0 \subset Y$ of continuity points. Let $Z_0 = \pi^{-1}(Y_0)$,
	a dense $G_\del$ subset of $M(G)$.
	Suppose $z_0 \in Z_0$ and let $z_1$ be an arbitrary point of $M(G)$.
	Then $R_\pi[z_1] $ is a proper closed subset of $M(G)$ and, by minimality and extreme proximality
	of $M(G)$, there is a sequence
	$g_i \in G$ with $\lim g_i z_1 = z_0$ and $\lim g_i R_\pi[z_1] = \{z_0\}$.
	However, as $z_0$ is a continuity point, we also have $\lim g_i R_\pi[z_1] = R_\pi[z_0]$,
	whence $R_\pi[z_0] = \{z_0\}$.
\end{proof}

\sk  
\section{The Fra\"{i}ss\'{e} class of finite circularly ordered systems and the KPT theory} 
\label{Fraisse}

In this section we present an alternative proof of Theorem \ref{main-ultra}, in the countable case.
It employs the notion of Fra\"{i}ss\'{e} classes and the Kechris-Pestov-Todorcevic theory \cite{KPT} and
has the advantage that it automatically yields an explicit description of $M(G)$.

In the sequel we will freely use the notations and results from this seminal work.
We begin by citing the following theorem proved in \cite[Theorem 7.5]{KPT}.

\begin{thm}\label{Ram}
	Let $L \supseteq \{<\}$ be a signature, $L_0 = L \setminus \{<\}$, 
	$\mathcal{K}$ a reasonable Fra\"{i}ss\'{e} order class in $L$. 
	Let $\mathcal{K}_0 = \mathcal{K} | L_0$, and $\mathbf{F} = {\rm Flim}(\mathcal{K})$, 
	$\mathbf{F}_0 = {\rm Flim}(\mathcal{K}_0)$. 
	Let $G = \Aut(\mathbf{F})$, $G_0 = \Aut(\mathbf{F}_0)$.
	Let $X_{\mathcal{K}}$ be the set of linear orderings 
	$\prec$ on $F (= F_0)$ which are $\mathcal{K}$-admissible.
	\begin{enumerate}
		\item
		If $\mathcal{K}$ has the Ramsey property, the $G_0$-ambit 
		$(X_{\mathcal{K}}, \prec_0)$ is the universal $G_0$-ambit with the property that 
		$G$ stabilizes the distinguished point, i.e. it can be mapped homomorphically to any 
		$G_0$-ambit $(X,x_0)$ with $G \cdot x_0 = \{x_0\}$. 
		Thus any minimal subflow of $X_{\mathcal{K}}$ is the universal minimal flow of $G_0$. 
		In particular, the universal minimal flow of $G_0$ is metrizable.
		\item
		If $\mathcal{K}$ has the Ramsey and ordering properties, $X_{\mathcal{K}}$ 
		is the universal minimal flow of $G_0$.
	\end{enumerate}
\end{thm}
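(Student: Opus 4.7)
The statement is the Kechris--Pestov--Todorcevic correspondence in its order-expansion form, so my plan is to reconstruct the KPT argument, which has three main moving parts: producing the ambit, using the Ramsey property for universality, and using the ordering property for minimality. First I would fix the topological setup: $G_0=\Aut(\mathbf{F}_0)$ is Polish in the pointwise convergence topology with basic identity neighborhoods $\{g_0:g_0|_A=\mathrm{id}_A\}$ for finite $A\subset F_0$; the set $X_{\mathcal{K}}\subseteq 2^{F_0\times F_0}$ is compact and $G_0$ acts continuously on it; and $\prec_0$ is the ordering carried by the Fra\"{i}ss\'{e} limit $\mathbf{F}$, whose stabilizer in $G_0$ is exactly $G=\Aut(\mathbf{F})$. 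The first easy step is to check that $G_0\cdot \prec_0$ is dense in $X_{\mathcal{K}}$: given any admissible $\prec$ and finite $A\subset F_0$, the two structures $(A,\prec_0|_A)$ and $(A,\prec|_A)$ both lie in $\mathcal{K}$, so ultrahomogeneity of $\mathbf{F}_0$ provides a $g_0\in G_0$ with $g_0\prec_0$ agreeing with $\prec$ on $A$. This shows $(X_{\mathcal{K}},\prec_0)$ is a $G_0$-ambit.

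For part (1), given any $G_0$-ambit $(X,x_0)$ with $Gx_0=\{x_0\}$, I define a candidate $G_0$-map $\varphi: G_0\cdot\prec_0\to X$ by $\varphi(g_0\prec_0)=g_0 x_0$. Well-definedness follows because if $g_0\prec_0=h_0\prec_0$ then $h_0^{-1}g_0\in G$, and $G$ fixes $x_0$ by hypothesis. The whole content of the theorem is that $\varphi$ is uniformly continuous with respect to the right uniformity on $G_0\cdot\prec_0$ and the uniformity on $X$, and this is where the Ramsey property enters. Given a neighborhood $U$ of $x_0$ in $X$ one picks a finite $A\subset F_0$ controlling a preimage neighborhood in $G_0/\mathrm{Stab}(A)$, and one then needs a finite $B\supset A$ such that any admissible ordering of $B$ agreeing with $\prec_0|_B$ on some prescribed sub-pattern forces the corresponding $g_0 x_0$ into $U$. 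One sets up the coloring on embeddings of a fixed structure whose color records into which of finitely many $\mathrm{Stab}(A)$-translates of $U$ the image $g_0 x_0$ falls; the Ramsey property produces a monochromatic copy, and $G$-invariance of $x_0$ plus homogeneity of $\mathbf{F}_0$ fix the color to be the one containing $x_0$. Once $\varphi$ is uniformly continuous, it extends to the Samuel-type closure, i.e. to all of $X_{\mathcal{K}}$, giving the universal property; any minimal subflow of $X_{\mathcal{K}}$ then maps onto any minimal flow of $G_0$ and hence equals $M(G_0)$, which is metrizable since $X_{\mathcal{K}}$ is.

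For part (2), I add the ordering property to promote the ambit $X_{\mathcal{K}}$ itself to a minimal flow. Given $\prec,\prec'\in X_{\mathcal{K}}$ and finite $A\subset F_0$, the ordering property yields a $B_0\in\mathcal{K}_0$ with $A\subset B_0$ such that every $\mathcal{K}$-admissible ordering of $B_0$ contains an isomorphic copy of $(A,\prec'|_A)$. Realize $B_0$ as a finite subset $B\subset F_0$ (by universality of $\mathbf{F}_0$); applying the ordering-property conclusion to the restriction $\prec|_B$ gives an embedding of $(A,\prec'|_A)$ into $(B,\prec|_B)$, and ultrahomogeneity of $\mathbf{F}_0$ converts this embedding into an element $g_0\in G_0$ with $g_0\prec|_A=\prec'|_A$. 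Since $A$ was arbitrary, $\prec'\in\overline{G_0\prec}$, so every orbit is dense and $X_{\mathcal{K}}$ is minimal; combined with part (1) this identifies it with $M(G_0)$.

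The main obstacle is the uniform-continuity step in part (1): one must translate the abstract topology on the ambit $(X,x_0)$ into a finite coloring of embeddings between structures in $\mathcal{K}$, in a way that the Ramsey property can digest, and then reassemble the resulting monochromatic copy into a genuine continuity estimate for $\varphi$. All other components---the ambit structure, the density argument, and the minimality argument from the ordering property---are comparatively routine homogeneity/extension arguments once the topological framework is in place.
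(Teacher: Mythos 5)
The paper does not actually prove this statement: it is quoted as \cite[Theorem 7.5]{KPT} ("We begin by citing the following theorem proved in..."), so there is no in-paper argument to compare against, and your reconstruction has to be measured against the KPT proof itself. Your verification that $(X_{\mathcal{K}},\prec_0)$ is an ambit, and your part (2) (ordering property $\Rightarrow$ every orbit is dense, via realizing $B_0$ inside $\mathbf{F}_0$ and pulling back a copy of $(A,\prec'|_A)$ by ultrahomogeneity of $\mathbf{F}_0$), are correct and standard. The problem is in part (1), where you have located the Ramsey property in the wrong step and, as a consequence, left the genuinely nontrivial step unaddressed.

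The uniform continuity of $\varphi:G_0\cdot\prec_0\to X$, $g_0\prec_0\mapsto g_0x_0$, does \emph{not} require the Ramsey property. The orbit map $g\mapsto gx_0$ is automatically right uniformly continuous for any continuous action on a compact space; it factors through $G_0/G$ since $G$ fixes $x_0$; and the quotient right uniformity on $G_0/G$ coincides with the uniformity induced from $X_{\mathcal{K}}$ under $gG\mapsto g\prec_0$. Indeed, if $g\prec_0$ and $h\prec_0$ agree on a finite set $A$, then the partial map $g^{-1}a\mapsto h^{-1}a$ $(a\in A)$ preserves $\prec_0$ as well as the $L_0$-structure, hence is a partial isomorphism of $\mathbf{F}$ and extends by ultrahomogeneity of $\mathbf{F}$ to some $w\in G$ with $hwg^{-1}$ fixing $A$ pointwise; thus $gG$ and $hG$ are $\widetilde{V_A}$-close. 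So the universality of the ambit among ambits with $G$-fixed base point is soft, and your Ramsey coloring is not needed there (as sketched it is also not sound: nothing forces the monochromatic color to be ``the one containing $x_0$''; the standard fixed-point arguments need a compactness/diagonalization over all finite covers instead). Where the Ramsey property is genuinely used is in the passage from ambit-universality to ``any minimal subflow of $X_{\mathcal{K}}$ is $M(G_0)$'': one must know that \emph{every} minimal $G_0$-flow $Y$ contains a $G$-fixed point $y_0$, so that $(Y,y_0)$ becomes an ambit with $G$-fixed base point and hence a factor of $X_{\mathcal{K}}$. This is exactly extreme amenability of $G=\Aut(\mathbf{F})$, which is equivalent to the Ramsey property of $\mathcal{K}$ by \cite[Theorem 4.8]{KPT} and is the combinatorial heart of the proof (cf.\ the Pestov result quoted in this paper as \cite[Theorem 6.2.9]{Pe-b}). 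Your write-up never produces, or invokes, a $G$-fixed point in an arbitrary minimal flow, so the sentence ``any minimal subflow of $X_{\mathcal{K}}$ then maps onto any minimal flow of $G_0$'' does not follow from what precedes it.
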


If $(A, [\cdot, \cdot, \cdot])$ is a finite circularlly ordered set
we say that a linear order on $A$ is {\em compatible} when it is
obtained from $(A, [\cdot, \cdot, \cdot])$ by a cut (see \cite[Remark 2.4]{GM-c}.). 
In the context of the following theorem this is the same as being admissible in the sense of \cite[Definition 7.2]{KPT}.
Thus $X_{\mathcal{K}}$ can be identified with the orbit closure of 
$\prec_0 $, $\overline{G_0 \cdot \prec_0} \subset LO$,
the compact set of linear orders on $F$, the universe of $\mathbf{F}$.

For the definition and properties of $\Sp(X; \Q_\circ)$ see 
\cite[Lemma 2.11]{GM-c}.

\begin{thm}
	Let $L = \{ [\cdot, \cdot, \cdot], <\}$ be a signature and $L_0 = L \setminus \{<\}$.
	Let  $\mathcal{K}$ be the Fra\"{i}ss\'{e} order class in $L$ consisting of all the finite
	circularlly ordered sets with compatible linear order. 
	Let $\mathbf{F} = {\rm Flim}(\mathcal{K})$, $\mathbf{F}_0 = {\rm Flim}(\mathcal{K}_0)$ and 
	set $G = \Aut(\mathbf{F})$, $G_0 = \Aut(\mathbf{F}_0)$.
	Then 
	\begin{enumerate}
		\item
		$\mathcal{K}$ is a reasonable order class and it satisfies the Ramsey and ordering properties. 
		\item
		Let $\Q_\circ$ denote the set $\Q/\Z \subset \R/\Z=\T$ of roots of unity,
		equipped with the cyclic order relation $[\cdot, \cdot, \cdot]$ it inherits from $\R/\Z$.
		Then $\mathbf{F} =  {\rm Flim}(\mathcal{K}) = (\Q_\circ, [\cdot, \cdot, \cdot], \prec_0)$,
		where $\prec_0$ is the dense linear order on  $\Q_0$ induced by the linear order
		obtained from the $\al$-cut $\Q_0 \cap (\al, \al +1) \pmod 1$ for some (any) $\al \in \R \setminus \Q_0$.
		\item
		
		$M(G_0) = X_{\mathcal{K}} \cong  
		\Sp(\T; \Q_\circ)$ is the universal minimal $G_0$-flow.
	\end{enumerate}
\end{thm}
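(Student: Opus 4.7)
The plan is to prove the three parts in sequence, exploiting the observation that in the class $\mathcal{K}$ the cyclic-order datum is redundant: a compatible linear order $<$ on a finite set $(A,R)$ automatically satisfies $R=R_<$ (Remark \ref{r:chech}), so $\mathcal{K}$ may be identified with the class of finite linearly ordered sets, with $L$-embeddings being precisely the order-preserving injections.

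Under this identification, part (1) reduces to the linear-order case: heredity, joint embedding, amalgamation and reasonableness transfer immediately; the Ramsey property becomes the classical finite Ramsey theorem in its ordered form; and the ordering property holds trivially, since on each finite size there is a unique isomorphism class in $\mathcal{K}$ and any order-preserving injection between finite linearly ordered sets automatically preserves the derived cyclic order. For part (2), I would verify that $\mathbf{F}:=(\Q_{\circ},[\cdot,\cdot,\cdot],\prec_0)$ is the Fra\"{i}ss\'{e} limit. Since $\alpha\notin\Q$, the $\alpha$-cut linear order $\prec_0$ is a dense linear order without endpoints, and its derived cyclic order $R_{\prec_0}$ agrees with the cyclic order inherited from $\T$. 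Thus $(\Q_{\circ},\prec_0)\cong(\Q,<)$ is ultrahomogeneous as a linear order, and since every $\prec_0$-preserving map automatically preserves $R_{\prec_0}$, this extends to $L$-ultrahomogeneity of $\mathbf{F}$. The age equals $\mathcal{K}$, as every finite linearly ordered set embeds order-preservingly into $(\Q,<)$, automatically preserving the cyclic order.

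For part (3), the hypotheses of Theorem \ref{Ram}(2) are met, yielding $M(G_0)=X_{\mathcal{K}}$. To identify $X_{\mathcal{K}}$ with $\Sp(\T;\Q_{\circ})$, I would construct the map $\Phi:\Sp(\T;\Q_{\circ})\to X_{\mathcal{K}}$ sending an irrational $\alpha\in\T\setminus\Q_{\circ}$ to the compatible linear order obtained by cutting $\Q_{\circ}$ at $\alpha$, and sending the two split points at each rational $q\in\Q_{\circ}$ to the two compatible linear orders in which $q$ is respectively the minimum and the maximum. Every compatible linear order on $\Q_{\circ}$ arises this way: if $\prec$ is compatible then, viewed inside $\T$, its initial and final segments are separated by a unique point of $\Sp(\T;\Q_{\circ})$. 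Hence $\Phi$ is a bijection.

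The main obstacle is showing that $\Phi$ is a $G_0$-equivariant homeomorphism. For equivariance, each $g\in G_0$ extends uniquely to an orientation-preserving homeomorphism $\bar g$ of $\T$, because a cyclic-order preserving bijection of the dense set $\Q_{\circ}$ is monotone on the rational arcs between any two chosen reference points and so extends continuously to the completion; this yields a self-homeomorphism of $\Sp(\T;\Q_{\circ})$ which one checks intertwines, via $\Phi$, with the $G_0$-action $\prec\mapsto g\cdot\prec$ on $X_{\mathcal{K}}$. For continuity, a net of cuts in $\Sp(\T;\Q_{\circ})$ converges iff for every finite set $F\subset\Q_{\circ}$ the cut eventually lies in a fixed open arc between consecutive elements of $F$, which precisely corresponds to pointwise convergence on $F\times F$ of the associated linear orders. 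Both spaces being compact Hausdorff, $\Phi$ is then automatically a homeomorphism, completing the identification.
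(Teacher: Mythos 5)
Your proposal is correct and follows essentially the same route as the paper: reduce to the linear-order case via the redundancy of the cyclic relation given a compatible linear order (so Ramsey follows from the classical Ramsey theorem and the ordering property is trivial), identify the Fra\"{i}ss\'{e} limit by ultrahomogeneity and age, and build the same cut-map $\Phi$ from $\Sp(\T;\Q_\circ)$ onto the admissible orders. The paper leaves the bijectivity, equivariance and continuity of $\Phi$ as "easy to check," and your sketch supplies exactly those verifications.
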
 

\begin{proof}
	(1)\  
	It is easy to see that $\mathcal{K}$ is a reasonable order class. It is also easy to check that it has the ordering property.
	The fact that $\mathcal{K}$ has the Ramsey property follows by the classical Ramsey theorem. 
	
	(2) Clearly $(\Q_\circ, [\cdot, \cdot, \cdot], \prec_0)$ is ultrahomogeneous and 
	$\mathcal{K} = {\rm Age}(\mathbf{F})$.
	Now apply  Fra\"{i}ss\'{e}'s theorem \cite[Theorem 2.2]{KPT} to conclude that
	$\mathbf{F} = (\Q_\circ, [\cdot, \cdot, \cdot], \prec_0)$.
	
	(3)\
	By Theorem \ref{Ram} we have 
	$$
	M(G_0) = X_\mathcal{K} =  \overline{G_0 \cdot \prec_0} \subset LO(\Q_0).
	$$
	Define a map $\Phi \colon \Sp(X; \Q_\circ) \to X_\mathcal{K}$ as follows.
	For $\xi \in \T \setminus \Q_0$ let $\Phi(\xi) = \prec_\xi$ be the linear order defined by the
	$\xi$-cut $\Q_0 \cap (\xi, \xi +1) \pmod 1$. For $\eta \in \Q_0$ let $\Phi(\eta^-) = \prec_\eta^-$ be the linear order 
	defined by $\Q_0 \cap [\eta, \eta +1) \pmod 1$, i.e. the dense order with $\eta$ as a first element;
	and let $\Phi(\eta^+) = \prec_\eta^+$ be the linear order 
	defined by $\Q_0 \cap (\eta, \eta +1] \pmod 1$, i.e. the dense order with $\eta$ as the last element.
	It is easy to check that $\Phi$ defines a $G_0$-flow isomorphism.
\end{proof}


\begin{remark} \label{R:Factors} 
	Here $G_0$ can be identified with $\Aut(\Q_{\circ})$. 
	With this explicit presentation of $M(G_0)$ it is easy to see 
	that the canonical map
	$\Sp(\T; \Q_\circ) \to \T$ is the only nontrivial factor of the $G_0$-flow $M(G_0)$. 
	It follows that every continuous action of the Polish group $(\Aut(\Q_{\circ}),\tau_p)$ on a compact space $X$ 
	has a closed $G$-subspace $Y$ which is c-ordered. 
\end{remark}

\begin{remark}
	It is not hard to check that, in the case that $X$ is countable, the collection
	$Cycl(X)$ of finite injective cycles on $X$ forms a 
	countable {\em projective Fra\"{i}ss\'{e} family of finite topological L-structures}
	in the sense of Irvin and Solecki \cite{IS} (here $L$ is the relational language $L = \{R\}$).
	Of course then $X_\infty = \widehat{G/H}$ is the corresponding 
	projective Fra\"{i}ss\'{e} limit of $Cycl(X)$.
\end{remark}

\sk 

\section{Automatic continuity and Roelcke precompactness}\label{AC}

\subsection{Automatic continuity and metr-int-c-ord}


\begin{lem} \label{lem-automat} 
	The Polish group $\Aut(\Q_{\circ})$ has the automatic continuity property. That is, every group homomorphism $h \colon \Aut(\Q_{\circ}) \to G$ to a separable 
	topological group $G$ is continuous. 
\end{lem}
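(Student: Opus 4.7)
The plan is to reduce the automatic continuity of $G := \Aut(\Q_{\circ})$ to the Rosendal--Solecki automatic continuity theorem for $\Aut(\Q_{<})$, by passing to the stabilizer of a point and then transferring continuity from an open subgroup to the whole group.

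Fix any $z \in \Q_{\circ}$ and let $H := \St(z) \leq G$. Since $\Q_{\circ}$ is treated as a discrete set and $G$ carries the pointwise convergence topology $\tau_p$, the subgroup $H$ is a basic identity neighborhood, hence clopen in $G$. Moreover, the topology on $H$ inherited from $G$ coincides with the pointwise topology coming from the faithful $H$-action on the discrete set $\Q_{\circ}\setminus\{z\}$, since $z$ is fixed by every element of $H$. As shown in the Claim inside the proof of Theorem \ref{main-ultra}, $H$ acts effectively, ultrahomogeneously, and by linear-order-preserving bijections on $(\Q_{\circ}\setminus\{z\}, <_z)$; conversely, every order-automorphism of $(\Q_{\circ}\setminus\{z\}, <_z)$ extends uniquely (by fixing $z$) to a circular-order automorphism in $H$, as is immediate from the description of the c-order in terms of the cut $<_z$ given in Remark \ref{r:chech}.2.

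Since $(\Q_{\circ}\setminus\{z\}, <_z)$ is a countable dense linear order without endpoints, Cantor's classical isomorphism theorem identifies it with $(\Q, <)$, and the preceding observations therefore yield a topological group isomorphism $H \cong \Aut(\Q_{<})$. Now given any abstract homomorphism $h: G \to G'$ into a separable topological group $G'$, its restriction $h|_H : H \to G'$ is a homomorphism from $\Aut(\Q_{<})$ into a separable topological group, hence is continuous by the Rosendal--Solecki theorem \cite{RS}. In particular, $h|_H$ is continuous at the identity $e$. Because $H$ is open in $G$, a neighborhood basis of $e$ in $(G, \tau_p)$ is already contained in $H$, so $h$ itself is continuous at $e$; and since $h$ is a group homomorphism, continuity at $e$ propagates to continuity on all of $G$.

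No serious obstacle is anticipated. The only substantive point is the topological group identification $H \cong \Aut(\Q_{<})$, after which Rosendal--Solecki together with the openness of $H$ in $G$ closes the argument automatically.
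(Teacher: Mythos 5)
Your proof is correct and follows essentially the same route as the paper: the paper's proof is exactly the observation that $\Aut(\Q_{<})$ sits as an open subgroup of $\Aut(\Q_{\circ})$ (namely as the point stabilizer you describe) together with the Rosendal--Solecki theorem, and you have merely spelled out the identification $\St(z)\cong\Aut(\Q_{<})$ via the cut and the standard transfer of continuity at the identity from an open subgroup to the whole group. No gaps.
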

\begin{proof} It is a well known theorem by Rosendal and Solecki \cite{RS} that the Polish group  $\Aut(\Q_{<})$ has the automatic continuity property. The same is true for the larger group $\Aut(\Q_{\circ})$ because the Polish group 
	$\Aut(\Q_{<})$ is an open subgroup of  $\Aut(\Q_{\circ})$. 
\end{proof}

\begin{cor} \label{metr-conv-int} 
	For every action of $G:=\Aut(\Q_{\circ})$ by homeomorphisms on a \textit{metric} compact space there exists a closed c-ordered $G$ subsystem. 
	That is, the discrete group $\Aut(\Q_{\circ})$ is metrically intrinsically c-ordered.  
\end{cor}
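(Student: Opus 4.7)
The plan is to combine the automatic continuity of $\Aut(\Q_{\circ})$ (Lemma \ref{lem-automat}) with the intrinsic c-orderedness of this same group viewed as a Polish topological group (Theorem \ref{main-ultra}), together with the factor analysis of $M(G,\tau_p)$ carried out in Remark \ref{R:Factors}.

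Concretely, let $G := \Aut(\Q_{\circ})$ as a discrete group, and suppose $G$ acts by homeomorphisms on a compact metric space $X$. This action is encoded by a group homomorphism $h \colon G \to \Homeo(X)$. The first step is to note that because $X$ is compact and metrizable, $\Homeo(X)$ with the topology of uniform convergence (equivalently, the compact-open topology) is a Polish topological group; in particular it is separable. Lemma \ref{lem-automat} then applies and gives that $h$ is continuous when the source is endowed with its Polish topology $\tau_p$. Thus the original discrete action upgrades to a continuous action of the Polish group $(G,\tau_p)$ on the compact space $X$.

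Now let $Y \subseteq X$ be any minimal closed $G$-invariant subsystem (which exists by Zorn's lemma). By the universality of $M(G,\tau_p)$, there is a continuous $G$-factor map $\pi \colon M(G,\tau_p) \twoheadrightarrow Y$. By Theorem \ref{main-ultra} the system $M(G,\tau_p)$ is c-ordered, and by Remark \ref{R:Factors} the only nontrivial factor of $M(G,\tau_p) = \Sp(\T;\Q_\circ)$ is the circle $\T$, so every factor of $M(G,\tau_p)$ is c-ordered. Therefore $Y$ is a closed c-ordered $G$-subsystem of $X$, as required.

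The substantive content is essentially already in place; the only point that needs care is the first step, namely the separability of $\Homeo(X)$ (which is why the metrizability hypothesis on $X$ cannot be dropped) together with the invocation of automatic continuity. Once the action factors continuously through $(G,\tau_p)$, the conclusion is immediate from Theorem \ref{main-ultra} and Remark \ref{R:Factors}.
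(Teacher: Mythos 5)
Your proposal is correct and follows exactly the route the paper intends: the paper's own proof is the one-line statement that the corollary ``follows from Theorem \ref{main-ultra} and Lemma \ref{lem-automat} taking into account Remark \ref{R:Factors}'', and your argument simply fills in those same steps (separability of $\Homeo(X)$ for compact metric $X$, automatic continuity to upgrade the action to a continuous action of $(G,\tau_p)$, universality of $M(G,\tau_p)$, and the factor analysis of Remark \ref{R:Factors} to conclude that the minimal subsystem is c-ordered).
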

\begin{proof}
	This follows from Theorem \ref{main-ultra} and Lemma \ref{lem-automat} 
	taking into account Remark \ref{R:Factors}. 
\end{proof}

\sk 

\subsection{Roelcke precompactness and Kazhdan's property (T)} 

\begin{prop} \label{t:Rolelckeprecompact}
	The Polish group $G:=\Aut(\Q_{\circ})$ is Roelcke precompact. 
\end{prop}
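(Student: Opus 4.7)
The plan is to apply the standard characterization for nonarchimedean topological groups: $G$ is Roelcke precompact if and only if for every open subgroup $V \leq G$ the double coset space $V \backslash G / V$ is finite. A basis of identity neighborhoods in $(\Aut(\Q_{\circ}),\tau_p)$ is given by the clopen subgroups $V_A := \St(A)$, where $A$ ranges over finite subsets of $\Q_{\circ}$, so it suffices to verify finiteness of $V_A \backslash G / V_A$ for each such $A$.

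First I would reduce to an oligomorphicity statement. Viewing $A$ as an ordered tuple of length $n := |A|$, the assignment $V_A \, g \, V_A \mapsto G\cdot(A,gA)$ yields a well-defined injection from $V_A \backslash G / V_A$ into the set of $G$-orbits on $\Q_{\circ}^{2n}$ under the diagonal action. Thus it is enough to show that $G$ acts oligomorphically on $\Q_{\circ}$, i.e.\ that $G$ has only finitely many orbits on $\Q_{\circ}^k$ for every $k \geq 1$.

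Next I would verify oligomorphicity using the ultrahomogeneity of the action $G \curvearrowright \Q_{\circ}$ (Corollary \ref{c:examples}(1)). By ultrahomogeneity, the $G$-orbit of a $k$-tuple $(x_1,\ldots,x_k) \in \Q_{\circ}^k$ is completely determined by its c-order isomorphism type, which is a finite piece of data: the equivalence relation on $\{1,\ldots,k\}$ recording which coordinates coincide, together with the induced cyclic order on the set of distinct values. Since for each fixed $k$ only finitely many such combinatorial types are possible, there are only finitely many $G$-orbits on $\Q_{\circ}^k$, whence $|V_A \backslash G / V_A| < \infty$ for every finite $A \subset \Q_{\circ}$, and Roelcke precompactness follows.

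I do not expect any substantial obstacle; the only mildly delicate point is the bookkeeping of c-order types for tuples that may have repeated entries, but this is handled uniformly by the \emph{equivalence-relation-plus-cyclic-order} description above. The same argument actually shows more, namely that $\Aut(\Q_{\circ})$ is oligomorphic as a permutation group on $\Q_{\circ}$, which is the usual strengthening of Roelcke precompactness available in the nonarchimedean Fra\"{\i}ss\'{e} setting.
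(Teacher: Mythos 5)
Your argument is correct, but it follows a genuinely different route from the one in the paper. The paper deduces the result from a permanence property it establishes separately (Proposition \ref{l:Roelcke}, generalizing Roelcke--Dierolf): if $H\leq G$ is Roelcke precompact as a topological group and the coset space $G/H$ is precompact in its right uniformity, then $G$ is Roelcke precompact. It then takes $H=\Aut(\Q_{<})$, a point stabilizer, whose Roelcke precompactness is classical, and invokes the precompactness of $(G/H,\mu_r)$ already proved in the course of Theorem \ref{main-ultra} (Lemma \ref{Claim1}). Your proof instead goes through the standard nonarchimedean criterion ($V\backslash G/V$ finite for all basic open subgroups $V=\St(A)$) and verifies oligomorphicity of $G\curvearrowright\Q_{\circ}$ directly from ultrahomogeneity; the reduction of double cosets to $G$-orbits on $\Q_{\circ}^{2n}$ and the finiteness of c-order types of $k$-tuples are both sound. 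This is essentially the Tsankov-style argument that the authors themselves flag as an alternative in a later remark, and it is what their Proposition \ref{pr:RP} gestures at in the linearly ordered case. What each approach buys: the paper's route recycles machinery already built for Theorem \ref{main-ultra} and yields a reusable permanence lemma for extensions $H\leq G$ with precompact quotient; your route is self-contained, applies verbatim to any countable ultrahomogeneous c-ordered set (not just $\Q_{\circ}$), and delivers the strictly stronger conclusion that $\Aut(\Q_{\circ})$ is oligomorphic.
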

\begin{proof}
	It is welll known that the Polish subgroup $H:=\Aut(\Q_{<})$ is Roelcke precompact. By our construction in Theorem \ref{main-ultra} the coset uniform space $(G/H,\mu_r)$ is 
	precompact. Now apply Proposition  \ref{l:Roelcke}. See also Proposition \ref{pr:RP} for another proof in a more general case. 
\end{proof}

The following result generalizes \cite[Prop. 9.17]{RD}.  

\begin{prop} \label{l:Roelcke}
	Let $H$ be a subgroup of a topological group $G$ such that $H$ is Roelcke precompact as a topological group and the coset space $G/H$ is precompact in the standard right uniformity. Then $G$ is Roelcke precompact. 
\end{prop}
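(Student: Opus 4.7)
The plan is to verify Roelcke precompactness of $G$ in its standard form: for every neighborhood $V$ of the identity in $G$ there exists a finite set $F \subset G$ with $G = VFV$. Fix a symmetric neighborhood $U$ of the identity in $G$ with $UU \subset V$. Since $(G/H, \mu_r)$ is precompact, there is a finite set $F_1 \subset G$ of coset representatives such that every coset $gH$ satisfies $g \in U f_1 H$ for some $f_1 \in F_1$; this gives the coarse decomposition $G = U F_1 H$.

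The technical heart of the argument is that the ``$H$-piece'' of this decomposition must be approximated by Roelcke precompactness of $H$, but the approximating neighborhood in $H$ will sit to the right of an element $f_1 \in F_1$ and therefore gets conjugated by $f_1$ before meeting the $U$-factor on the left. By continuity of conjugation by each of the finitely many elements of $F_1$, one can choose a neighborhood $W_0$ of identity in $G$, contained in $U$, such that $f_1 W_0 f_1^{-1} \subset U$ for every $f_1 \in F_1$. Set $W := W_0 \cap H$, which is a neighborhood of identity in $H$ with its subspace topology inherited from $G$.

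Using that $H$ is itself Roelcke precompact, choose a finite set $F_2 \subset H$ with $H = W F_2 W$. For arbitrary $g \in G$, write $g = u f_1 h = u f_1 w_1 f_2 w_2$ with $u \in U$, $f_1 \in F_1$, $w_1, w_2 \in W$, $f_2 \in F_2$, and rearrange:
$$
g \;=\; \bigl(u\,(f_1 w_1 f_1^{-1})\bigr)\,(f_1 f_2)\, w_2.
$$
The first bracketed factor lies in $UU \subset V$; the middle factor lies in the finite set $F_1 F_2 \subset G$; and $w_2 \in W \subset U \subset V$. Hence $g \in V\,(F_1 F_2)\,V$, so the finite set $F := F_1 F_2$ witnesses Roelcke precompactness of $G$. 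The only step of any subtlety is the conjugation step: one must shrink the $H$-neighborhood enough so that, after being displaced by an element of $F_1$, it still fits inside $U$; finiteness of $F_1$ together with continuity of conjugation in $G$ is exactly what makes this possible.
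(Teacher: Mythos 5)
Your proof is correct and follows essentially the same route as the paper: decompose $G = U F_1 H$ via precompactness of $G/H$, then absorb the $H$-factor using Roelcke precompactness of $H$. The only difference is one of presentation: your explicit conjugation step (shrinking to $W_0$ with $f_1 W_0 f_1^{-1} \subset U$ for the finitely many $f_1 \in F_1$) is exactly the content that the paper compresses into the assertion that each translate $a_i H$, and hence the finite union $EH$, is a Roelcke-precompact subset of $G$; your version makes this verification explicit and lands directly on $G = VFV$ rather than the paper's $G = U^2 F U$.
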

\begin{proof}
	We have to show that for every 
	neighborhood  $U$ of $e$ in $G$ there exists a finite subset $F \subset G$ such that $G=UFU$. In fact, it is enough to show that $G=U^2FU$.
	Since $G/H$ is precompact with respect to the standard right uniformity there exist finitely many points $a_1H, a_2H, \cdots, a_nH$ in the coset space $G/H=\{[g]=gH\}_{g \in G}$ such that $E:=\{a_1, a_2, \cdots, a_n\}$ is a subset of $G$ and 
	$$
	G/H= \cup \{U [a_i]: a_i \in E\}. 
	$$ 
	This implies that 
	$$
	G= U E H. 
	$$
	Now, for $G=U^2FU$ it suffices to show that $EH \subset UFU$ for some finite $F \subset G$. 
	
	By our assumption $H$ is Roelcke precompact as a topological group. That is, $(H,R_H)$ is precompact. 
	It follows that $H$ is a precompact subset of $G$ with respect to the Roelcke uniformity $R_G$ of $G$. Then the same is true for each subset $a_iH \subset G$. Hence the finite union $EH$ is also Roelcke precompact subset of $G$. Therefore, for given $U$ there exists a finite subset $F$ in $G$ such that 
	$$EH \subset UFU, $$ 	as required. 
\end{proof}

\sk 
\begin{cor} \label{c:kazhdan} 
	$\Aut(\Q_{\circ})$ satisfies the Kazhdan's property (T). 
\end{cor}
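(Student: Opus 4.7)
The plan is to deduce property (T) by invoking the main theorem of Evans and Tsankov from \cite{ET}, which establishes Kazhdan's property (T) for a natural class of Roelcke precompact (equivalently, oligomorphic) closed subgroups of $S_\infty$. The strategy is essentially a verification of hypotheses: Roelcke precompactness we already have from Proposition \ref{t:Rolelckeprecompact}, and the remaining technical hypothesis amounts to ruling out ``small'' unitary representations of $G$ via an oligomorphy/ultrahomogeneity argument.

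First, I would record that $G:=\Aut(\Q_{\circ})$ is a non-archimedean Polish group acting faithfully on the countable set $\Q_{\circ}$, so it embeds as a closed subgroup of $S_{\Q_{\circ}}\cong S_\infty$ with the pointwise topology. By the ultrahomogeneity of the circular action $G\curvearrowright \Q_{\circ}$ (which we have already used in Theorem \ref{main-ultra}), the orbits of $G$ on $\Q_{\circ}^n$ are exactly the finitely many possible cyclic order types of $n$-tuples; thus $G$ is oligomorphic. This is equivalent, in the non-archimedean setting, to Roelcke precompactness, which we have independently via Proposition \ref{t:Rolelckeprecompact}.

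Next, I would verify the ``no small representations'' hypothesis of \cite{ET}. The most efficient way is to check that $G$ has no non-trivial continuous homomorphism to a compact group; it suffices to show $G$ has no proper open subgroup of finite index. If $U\leq G$ were such a subgroup, $U$ would contain some pointwise stabilizer $\St(A)$ of a finite set $A\subset \Q_{\circ}$ and the quotient $U/\St(A)$ would be a $\St(A)$-invariant finite subset of the $G$-orbit $G\cdot A$. By ultrahomogeneity this orbit consists of all cyclically ordered tuples on $\Q_{\circ}$ of the same type as $A$, which is an infinite set on which $\St(A)$ acts with infinite orbits (by the density of the c-order, Lemma \ref{lem;dense}); no non-empty finite $\St(A)$-invariant subset can exist, forcing $U=G$. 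Applying the Evans-Tsankov theorem then yields property (T) for $G$.

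The main potential obstacle is matching the specific form of the Evans-Tsankov criterion used in \cite{ET} to the structure of $\Aut(\Q_{\circ})$: in their paper the hypothesis is often phrased via a transitivity condition on the action on $n$-tuples and the absence of certain invariant measures/finite quotients, and one must check it carefully. Once this matching is done, property (T) for $G$ is immediate from their theorem, and in particular any strongly continuous unitary representation of $G$ with almost invariant vectors has an invariant vector.
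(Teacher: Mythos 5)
Your proof takes essentially the same route as the paper: the paper's argument is simply to combine the Evans--Tsankov theorem with the Roelcke precompactness of $\Aut(\Q_{\circ})$ established in Proposition \ref{t:Rolelckeprecompact}. Note, however, that the form of the Evans--Tsankov result invoked here states that \emph{every} non-archimedean Roelcke precompact Polish group has property (T), with no further hypothesis, so your additional verification of a ``no small representations'' / no-proper-open-finite-index-subgroup condition is superfluous (and, as written, the step identifying $U/\St(A)$ with a finite $\St(A)$-invariant subset of the orbit $G\cdot A$ is not quite coherent, though nothing in the conclusion depends on it).
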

\begin{proof}
	By a result of Evans and Tsankov \cite{ET} every nonarchimedean Roelcke precompact Polish group 
	has the Kazhdan's property (T). 
\end{proof}


\begin{prop} \label{pr:RP} 
	Let $G \curvearrowright X$ be a ultrahomogeneous action on a circularly (linearly) ordered set $X$. Then the topological group $(G,\tau_p)$ is Roelcke precompact.
\end{prop}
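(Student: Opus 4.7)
The plan is to give a direct, uniform argument for both the circular and linear cases by counting double cosets. A topological group is Roelcke precompact iff for every open neighborhood $V$ of the identity the set of double cosets $V\backslash G/V$ is finite, so it suffices to treat a basic neighborhood $V_F := \St(F) = \{g \in G : g|_F = \mathrm{id}_F\}$ where $F = \{x_1, \ldots, x_n\} \subset X$ is finite. Write $\vec F = (x_1, \ldots, x_n)$ for the corresponding tuple.

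First I would set up a bijection between $V_F \backslash G / V_F$ and the $V_F$-orbits (under the diagonal action) on the set $\{g\vec F : g \in G\} \subset X^n$, sending $V_FgV_F \mapsto V_F\cdot (g\vec F)$. The forward direction of the equivalence is immediate, since $h_2\vec F = \vec F$ whenever $h_2 \in V_F$. For the converse, if $h_1 \in V_F$ satisfies $h_1(g_1\vec F) = g_2\vec F$, I would put $h_2 := g_1^{-1}h_1^{-1}g_2$ and observe that $h_2(x_i) = g_1^{-1}(g_1(x_i)) = x_i$, so $h_2 \in V_F$ and $g_2 = h_1 g_1 h_2$. Next I would show that there are only finitely many $V_F$-orbits of injective $n$-tuples in $X$. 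By ultrahomogeneity of $G \curvearrowright X$, two injective tuples $\vec A, \vec B \in X^n$ are $V_F$-equivalent iff the labeled configurations $(\vec F, \vec A)$ and $(\vec F, \vec B)$ realize the same (circular or linear) order type: the same type yields a c-order (resp.\ linear order) isomorphism $F\cup A \to F\cup B$ fixing $F$ and sending $a_i \mapsto b_i$, which extends by ultrahomogeneity to an element of $G$ lying in $V_F$. Since there are only finitely many isomorphism classes of labeled configurations of size at most $2n$, this finiteness together with Step~1 gives $|V_F\backslash G/V_F| < \infty$, as required.

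The main subtlety lies in permitting coincidences between $F$ and $g(F)$, in which case $(\vec F, g\vec F)$ is supported on fewer than $2n$ distinct points; this is handled by phrasing the invariant in terms of labeled tuples up to isomorphism rather than unordered sets, at which point the count remains finite and the argument is uniform across the circular and linear cases. Alternatively, the circular case could be reduced to the linear case via Proposition~\ref{l:Roelcke}: by the proof of Theorem~\ref{main-ultra}, for any fixed $z \in X$ the stabilizer $H := \St(z)$ acts ultrahomogeneously on the linearly ordered set $X \setminus \{z\}$ (using the cut construction of Remark~\ref{r:chech}.2) and the coset space $G/H$ is precompact in the right uniformity by Lemma~\ref{Claim1}.
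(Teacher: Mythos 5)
Your proof is correct, but it takes a more direct and uniform route than the paper. The paper splits into two cases: for linearly ordered $X$ it simply invokes the double-coset count from Rosendal's proof of \cite[Theorem 5.2]{RosendalBergman09}, and for circularly ordered $X$ it does \emph{not} redo that count; instead it fixes $z\in X$, observes (as in the proof of Theorem \ref{main-ultra}) that $H=St(z)$ acts ultrahomogeneously on the linearly ordered set $X\setminus\{z\}$, hence is Roelcke precompact by the linear case, that $G/H$ is precompact in the right uniformity by Lemma \ref{Claim1}, and then applies Proposition \ref{l:Roelcke} to conclude. You instead carry out the double-coset argument explicitly and simultaneously for both order types: the identification of $V_F\backslash G/V_F$ with the $V_F$-orbits of tuples $g\vec F$ is correct (your computation $h_2:=g_1^{-1}h_1^{-1}g_2\in V_F$ checks out), and bounding the number of orbits by the finitely many isomorphism types of labeled configurations $(\vec F, g\vec F)$ --- with the coincidence pattern between $F$ and $g(F)$ recorded as part of the type, so that the partial map fixing $F$ and sending $g\vec F$ to the other tuple is well defined before ultrahomogeneity is applied --- is exactly the point that needs care, and you address it. What your approach buys is self-containment: it avoids Proposition \ref{l:Roelcke} and Lemma \ref{Claim1} entirely and makes the linear and circular cases literally the same argument. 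What the paper's approach buys is brevity given the machinery already built for Theorem \ref{main-ultra}, plus the structural information that Roelcke precompactness of $G$ factors through that of the point stabilizer together with precompactness of $G/H$. Your closing remark correctly identifies this reduction as the alternative route, which is in fact the one the paper takes.
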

\begin{proof}
	First consider the case of linearly ordered $X$. Then Roelcke precompactness of $G$ can be proved as in the proof of Rosendal \cite[Theorem 5.2]{RosendalBergman09} (for countable $X=\Q$). 
	See also Tsankov \cite{Tsankov}. 
	One may show that for every open subgroup $U$ of $G$ there are only
	finitely many different double cosets  
	$UxU$ in $G$. 
	
	In the case of circularly ordered $X$ we combine Lemma \ref{l:Roelcke}.
	and Theorem \ref{main-ultra} which asserts that the coset space $G/H$ is right precompact, where $H$ is a stabilizer subgroup $St(z)$ which is Roelcke precompact by the linear order case.
\end{proof}


\begin{remark} \ 
	\begin{enumerate}
		\item Propositions \ref{t:Rolelckeprecompact} and \ref{pr:RP} seemingly can be derived also by results of Tsankov \cite{Tsankov}. 
		\item The Polish group $G:=\Aut(\Q_{\circ})$ has strong uncountable cofinality as was proved by Rosendal \cite[Theorem 7.1]{RosendalBergman09}. Hence (again by the results of Rosendal \cite{12RosendalBounded}) has properties (OB) and (ACR). The latter means that any affine continuous action
		of $G$ on a separable reflexive Banach space has a fixed point. 
	\end{enumerate}  
\end{remark}

\sk 

\section{Some perspectives and questions}

\subsection{Conv-int-tame nonamenable discrete groups} 

For nondiscrete topological groups we have several examples of conv-int-tame nonamenable groups. Among others, the 
Lie 
groups $\mathrm{SL}_n(\R)$ and 
the Polish group
$H_+(\T)$ (see Example \ref{r:oldEx}). However, the discrete case is open. 

\begin{question} \label{ourQ} 
	Is there a nonamenable convexly intrinsically tame (countable) \textit{discrete} group ? 
\end{question}




Note that a (topological) group $G$ is conv-int-tame iff its universal minimal strongly proximal $G$-system system $M_{sp}(G)$ is tame. 
Thinking about possible candidates we observe that the free group $F_2$ is not conv-int-tame,
because $M_{sp}(F_2)$ is not tame. 
One way to see this is as follows.
Let $X$ be the Cantor set, $(X,T)$ be the Morse minimal system 
and $S \colon X \to X$ a self homeomorphism such that the cascade $(X,S)$ is strongly proximal.
Let $F_2$, the free group on generators $a, b$, act on $X$ via the map $F_2 \to \Homeo(X)$,
defined by $a \mapsto T, b \mapsto S$. Clearly then the $F_2$-system $X$
is minimal, strongly proximal but not tame, since already the subaction 
$(X,T)$ 
is not tame. 


%


\subsection{Topological subgroups of $S_{\infty}$}  

\begin{problem}
	Kechris-Pestov-Todorcevic result \cite[Theorem 4.8]{KPT}  characterizes extremely amenable subgroups of $S_{\infty}$ in terms of Fra\"{i}ss\'{e} limits. Namely, 
	$\Aut(A)$ is extremely amenable (where $A$ is the Fra\"{i}ss\'{e} limit of a class $K$) if and only if the Fra\"{i}ss\'{e} order class $K$ has the Ramsey property. 
	
	
	It would be interesting to study (or, even characterize) 
	some other classes of Polish groups (or of closed subgroups of $S_{\infty}$) $G$,
	for which  $M(G)$ is :
	\ben 
	\item int-c-ordered.
	\item int-tame.
	\item conv-int-c-ordered.
	\item conv-int-tame. 
	\item metric versions of the previous concepts (as in Definition \ref{d:int-tame}). 
	\een 
	
\end{problem}

The class (1) contains the Polish group $\Aut(\Q_{\circ})$ 
(which is not extremely amenable). The class (4) contains all countable discrete amenable groups.  

\sk 

\subsection{Other structures} 

Pestov's theorem can be reformulated by saying that if $G$ acts ultrahomogeneously on a linearly ordered set then for the topological group $(G,\tau_p)$ in its pointwise convergence topology 
the minimal universal $G$-system 
$M(G)$
is also linearly ordered. 
Indeed, note that every compact minimal linearly ordered $G$-space is trivial. 

As we have seen Pestov's theorem can be naturally generalized to ultrahomogeneous circularly ordered sets. Namely, $M(G)$ is again circularly ordered. It is natural to wonder if there are some analogs for other structures. 

\begin{question} \label{q:str} 
	Let $G$ act 
	on a set $X$ with some  structure $R$. Under which reasonable conditions the universal minimal $G$-system $M(G,\tau_p)$ also admits a structure of the same type ?  
\end{question}

\sk

\section{Appendix: Large ultrahomogeneous circularly ordered sets}

It is well known that 
for every infinite cardinal $\tau$ there exists a ultrahomogeneous 
linearly ordered set $X$ of cardinality $\tau$  (see for example \cite{Pes98}).  
As expected, the same is true for circularly ordered sets. 
For the countable case we have the unique (up to isomorphism) model $\Q_{\circ}$. 
For the cardinality $2^{\aleph_0}$ we have, at least, the circle $\T$. 
As to the general case, 
very recently, responding to our question, J.K. Truss \cite{Tr} and V.G. Pestov \cite{Pest-pr18}  informed us that,  according to their (unpublished) notes, the following result holds
(their approaches are essentially different).

\begin{prop} \label{p:largeCO} (Pestov \cite{Pest-pr18}, Truss \cite{Tr})
	For any prescribed infinite cardinal $\tau$ there exists a ultrahomogeneous circularly ordered set of cardinality $\tau$. 
\end{prop}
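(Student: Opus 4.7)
The plan is a transfinite Fra\"{i}ss\'{e}-style construction, in the spirit of Truss's approach, which avoids the delicate set-theoretic existence questions for large $\eta_\alpha$-sets.

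The first step is to verify that the class $\mathcal{K}$ of all finite circularly ordered sets is a Fra\"{i}ss\'{e} class: it is clearly hereditary, and it has the joint embedding and amalgamation properties. The amalgamation step reduces, via the cut construction (Remark \ref{r:chech}(2)), to the classical amalgamation of finite linearly ordered sets: given $C \hookrightarrow A$ and $C \hookrightarrow B$ with $C$ nonempty, pick any point $c \in C$, form the pointed cut linear orders at $c$, amalgamate them as linear orders in the usual way, and re-cyclize.

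I would then construct $X$ as the union of a continuous increasing chain $(X_\beta)_{\beta < \tau}$ of circularly ordered sets with $|X_\beta| \leq |\beta| + \aleph_0$, using a pre-chosen transfinite enumeration of length $\tau$ to interleave two bookkeeping tasks at each successor stage: (a) amalgamating one more finite circularly ordered set on top of $X_\beta$, so as to ensure universality for finite c-orders, and (b) given a finite partial c-order isomorphism $\psi : F_1 \to F_2$ with $F_1, F_2 \subset X_\beta$ together with one further point $p \in X_\beta$, use $\mathcal{K}$-amalgamation to adjoin a single new point $p'$ extending $\psi$ to a partial c-order isomorphism on $F_1 \cup \{p\}$. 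At limit stages take directed unions, which remain circular orders by the directed-union analogue of Lemma \ref{invLimCirc}. The union $X$ then has cardinality $\tau$ and is ultrahomogeneous by the standard Fra\"{i}ss\'{e} back-and-forth argument together with the extension bookkeeping.

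The main obstacle will be the bookkeeping of task (b): the number of triples $(F_1, F_2, p)$ with $F_1, F_2 \subset X_\beta$ finite and $p \in X_\beta$ may outrun $\tau$ at intermediate stages, particularly for singular $\tau$. The standard workaround is to enumerate tasks by $\tau \times \tau$ (of cardinality $\tau$) and to schedule them so that every finite partial isomorphism appearing at any stage is eventually treated. Pestov's alternative route in \cite{Pest-pr18}, starting instead with an $\eta_\alpha$-set $(L,<)$ of cardinality $\tau$ and endowing it with the induced circular order $R_<$ from Remark \ref{r:chech}(1) — then extending any finite partial c-order isomorphism by cutting at a well-chosen point $z$, translating the problem to a partial linear-order isomorphism between $(L,<_z)$ and $(L,<_{z'})$, and invoking the saturation of $L$ — is shorter, but requires first establishing the existence of an $\eta_\alpha$-set of the precise cardinality $\tau$, which is itself a genuine set-theoretic issue.
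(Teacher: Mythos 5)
Your construction has a genuine gap at the final step, where you invoke ``the standard Fra\"{i}ss\'{e} back-and-forth argument together with the extension bookkeeping'' to conclude ultrahomogeneity. Your task (b) secures only the \emph{finite} one-point extension property: every c-order isomorphism between finite subsets can be extended by one more point. For $\tau = \aleph_0$ this does yield ultrahomogeneity, because the back-and-forth building a full automorphism runs for $\omega$ steps and every intermediate partial isomorphism is finite. For uncountable $\tau$, however, the back-and-forth must be run for $\tau$ steps, and at limit stages the partial isomorphism being extended has become infinite; nothing in your construction guarantees that infinite partial isomorphisms of cardinality $<\tau$ admit one-point extensions, and the finite extension property does not imply this. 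The failure is not hypothetical: the linear order obtained by placing a copy of $\Q$ above a copy of $\R$ is dense without endpoints, hence satisfies the finite one-point extension property, yet it is not ultrahomogeneous (no automorphism can carry a point of the $\R$-part to a point of the $\Q$-part, since the final segments have different cardinalities). Your chain construction could perfectly well produce a circularly ordered set with an analogous defect. To repair this you would need to close off under extensions of \emph{all} partial isomorphisms of size $<\tau$ (i.e., build a saturated or at least strongly $\tau$-homogeneous structure), which reintroduces exactly the set-theoretic difficulties (cardinal arithmetic, $\eta_\alpha$-set existence) that you were hoping to avoid; note also that your bookkeeping concern about the number of triples $(F_1,F_2,p)$ is not the real obstruction, since $|X_\beta|\le|\beta|+\aleph_0<\tau$ at every stage.

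The paper's proof (due to Pestov) avoids the back-and-forth entirely. It builds, by transfinite iteration of simple transcendental extensions $k_{\beta+1}=k_\beta(\alpha_\beta)$ with $\alpha_\beta$ a positive infinitesimal, a linearly ordered field $k$ of cardinality $\tau$; puts the circular order on the quotient group $\mathrm{fin}(k)/\Z$ of finite elements modulo $\Z$, in analogy with $\R/\Z$; and then verifies ultrahomogeneity by writing down an explicit global automorphism, namely a piecewise linear order-preserving bijection of $[0,1)_k$ carrying one finite configuration to the other, composed with rotations. The field operations are precisely what let one produce the automorphism in a single closed formula, with no transfinite approximation. (Your closing description of ``Pestov's alternative route'' via $\eta_\alpha$-sets is therefore not what the paper does either.)
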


With his permission we reproduce Pestov's proof of Proposition \ref{p:largeCO}. 

\begin{proof} 
	
	
	{\bf Step 1:}
	
	For every infinite cardinal $\tau$, there exists a linearly ordered field having $\tau$ as its cardinality, 
	this is well known and was rediscovered a number of times \cite{HM, N}. 
	
	The simplest 
	construction is as follows (borrowed from \cite{LPT}).
	Given a field $k$, denote $k(\alpha)$ a simple transcendental extension of $k$ with variable $\alpha$. In other words, $k(\alpha)$ consists of all rational functions $p(\alpha)/q(\alpha)$ where $p,q$ are polynomials in $\alpha$ with coefficients in $k$. 
	If now $k$ is an ordered field, then $k(\alpha)$ becomes an ordered field with $\alpha$ as a positive infinitesimal, that is, $0<\alpha<x$ for all $x\in k$, $x>0$. The sign of a polynomial $p(\alpha)=a_0+a_1\alpha +\ldots +a_n\alpha^n$ is the sign of the non-zero coefficient of the lowest degree. This extends to the rational functions in an obvious way. Clearly, $k$ is an ordered subfield of $k(\alpha)$.
	
	If $\tau$ is an infinite cardinal, we construct recursively in $\beta\leq\tau$ an increasing transfinite sequence $k_{\beta}$ of ordered fields, where $k_0=\Q$ (or any other fixed ordered field), $k_{\beta+1}=k_\beta(\alpha_{\beta})$ with $\alpha_{\beta}$ being a positive infinitesimal over $k_{\beta}$, and for limit cardinals $\beta$ we set $k_{\beta}=\cup_{\gamma<\beta}k_{\gamma}$. It is easy to see that the ordered field $k_{\tau}$ has the required cardinality $\tau$. 
	
	\ssk
	
	{\bf Step 2:}
	
	Every ordered field $k$ has characteristic zero. The absolute value in $k$ is defined as $\abs x=\max\{x,-x\}$. 
	Given an ordered field $k$, an element $x\in k$ is {\em finite} if for some natural number $n$ one has $\abs x\leq n$. 
	The subset $\mathrm{fin}(k)$ of all finite elements of an ordered field forms a convex subring, in which $\Z$ is a cofinal and coinitial ordered subring.
	
	On the additive factor-group $\mathrm{fin}(k)/\Z$ one may define now a circular order as a ternary relation $R$. The argument is similar to the case of the circle $\R/\Z$. 
	A triple $(a,b,c)$ is in $R$ if and only if one can write $a=a^\prime+\Z$, $b=b^\prime+\Z$, $c=c^\prime+\Z$ with $a^\prime\leq b^\prime\leq c^\prime$. 
	
	Clearly, the cardinality of $\mathrm{fin}(k)/\Z$ equals that of $k$.
	
	\ssk
	
	
	{\bf Step 3:}
	
	To verify ultrahomogeneity of $\mathrm{fin}(k)/\Z$, let $a_1,\ldots,a_n$, and $b_1,\ldots,b_n$, be two $n$-tuples of elements of the group which are positively cyclically ordered, that is, whenever $[i,j,k]$ within the finite group $\Z_k$, one has $[a_i,a_j,a_k]$ and $[b_i,b_j,b_k]$. Rotating both sets (and thus preserving the cyclic order), one can assume without loss in generality that $a_1=b_1=0$. Identifying the group $\mathrm{fin}(k)/\Z$ with the interval $[0,1)$ in $k$, one gets two sets of representatives of the $n$-tuples within $k$, $0=a^\prime_1<a^\prime_2<\ldots <a^\prime_n<1$ and $0=b^\prime_1<b^\prime_2<\ldots<b^\prime_n<1$. 
	Now, like in the proof of Assertion 5.1 in \cite{Pes98}, apply a piecewise linear, order preserving bijective transformation of $[0,1)_k$ onto itself sending $a^\prime_i\mapsto b^\prime_i$, $i=1,2,\ldots, n$:
	\[f(x)=\begin{cases} \frac{b^\prime_2}{a^\prime_2}x, & \text{if $0\leq x \leq a^\prime_1$,} \\
	b^\prime_i+\frac{b^\prime_{i+1}-b^\prime_i}{a^\prime_{i+1}-a^\prime_i}(x-a^\prime_i), & \text{if $a^\prime_i\leq x\leq a^\prime_{i+1}$,
		$i=1,2,\dots,n-1$,} \\
	b^\prime_n+\frac{1-b^\prime_i}{1-a^\prime_i}(x-a^\prime_n), & \text{if $a^\prime_n\leq x <1$.}
	\end{cases}
	\]
	The resulting map lifts to a cyclical-order preserving self-bijection of the group, taking the first $n$-tuple to the second (strictly speaking, we have to compose this map with the two rotations).  
\end{proof}

\bibliographystyle{amsplain}

%
%
%
%

\end{document}